\documentclass[11pt]{article}
\usepackage[T1]{fontenc}
\usepackage{lmodern}
\usepackage[a4paper,margin=28mm]{geometry}
\usepackage{amsmath,amssymb,amsthm,mathtools}
\usepackage{microtype}
\usepackage{enumitem}
\usepackage[hidelinks]{hyperref}
\usepackage{bookmark}
\usepackage{needspace}
\usepackage{xcolor}
\DeclareRobustCommand{\blueedit}[1]{{\color{black}#1}}
\newenvironment{bluepassage}{\par\begingroup\color{black}}{\par\endgroup}
\DeclareRobustCommand{\rev}[1]{{#1}}
\newenvironment{revision}{\par\begingroup}{\par\endgroup}
\DeclareRobustCommand{\revred}[1]{{#1}}

\hypersetup{pdftitle={Non-existence of Negative Weight Derivations of the Moduli Algebras of Isolated Weighted Homogeneous Hypersurface Singularities},pdfauthor={Guorui Ma}}
\setlist[enumerate]{label=\textup{(\roman*)},leftmargin=2em,itemsep=3pt}
\numberwithin{equation}{section}
\newtheorem{theorem}{Theorem}[section]
\newtheorem{lemma}[theorem]{Lemma}
\newtheorem{proposition}[theorem]{Proposition}
\newtheorem{corollary}[theorem]{Corollary}
\newtheorem*{maintheorem}{Main Theorem}

\theoremstyle{definition}

\theoremstyle{remark}
\newtheorem{remark}[theorem]{Remark}
\newcommand{\C}{\mathbb C}
\newcommand{\Z}{\mathbb Z}

\newcommand{\m}{\mathfrak m}
\newcommand{\p}{\mathfrak p}
\newcommand{\Oan}{\mathcal O}
\newcommand{\Der}{\operatorname{Der}_{\C}}
\newcommand{\Spec}{\operatorname{Spec}}
\newcommand{\Min}{\operatorname{Min}}
\newcommand{\red}{\mathrm{red}}

\newcommand{\length}{\operatorname{length}}

\title{Non-existence of Negative Weight Derivations of the Moduli Algebras of Isolated Weighted Homogeneous Hypersurface Singularities}
\author{Guorui Ma\\[3pt]
{\small Shanghai Institute for Mathematics and Interdisciplinary Sciences, Shanghai, 200433, China}\\[1pt]
{\small Research institute of Intelligent Complex Systems, Fudan University, Shanghai, 200433, China}\\[1pt]
{\small\href{mailto:mgr18@tsinghua.org.cn}{\texttt{mgr18@tsinghua.org.cn}}}}
\date{}
\makeatletter
\renewcommand{\@maketitle}{%
  \newpage
  \begingroup
  \centering
  {\LARGE\@title\par}
  \vskip 0.8em
  {\normalsize\@author\par}
  \vskip 0.9em
  \endgroup
}
\makeatother
\begin{document}
\maketitle
\begin{abstract}
Mather and Yau proved that the analytic isomorphism class of an isolated complex hypersurface singularity is determined by its dimension and its moduli algebra. Yau's conjecture asserts that the moduli algebra of an isolated weighted homogeneous hypersurface singularity over $\C$ has no nonzero derivation of negative degree. We prove this conjecture.
\end{abstract}
%\tableofcontents

\section{Introduction}

\begin{bluepassage}
Let $f\in\Oan=\C\{x_1,\ldots,x_n\}$ satisfy $f(0)=0$ and have an isolated critical point at the origin. Its Milnor algebra and Tjurina algebra are
\[
M_f=\Oan/(\partial_1f,\ldots,\partial_nf),\qquad
T_f=\Oan/(f,\partial_1f,\ldots,\partial_nf),
\]
where $\partial_i=\partial/\partial x_i$. The Tjurina algebra is also called the moduli algebra. Mather and Yau proved that two isolated complex hypersurface singularities of the same dimension are analytically isomorphic if and only if their moduli algebras are isomorphic \cite{MY82}.

Suppose that $f$ is weighted homogeneous of degree $d$ for positive rational weights $w_1,\ldots,w_n$. Positivity of the weights makes $f$ a polynomial. We scale the weights and $d$ to positive integers and put $W=\max_iw_i$. Euler's identity $df=\sum_iw_ix_i\partial_if$ gives $T_f=M_f$. The weights define a grading on this algebra. We write $\Der(M_f)_k$ for the space of $\C$-linear derivations $\delta$ such that $\delta((M_f)_j)\subseteq(M_f)_{j+k}$ for every $j$, with negative graded components understood to be zero.

Yau conjectured that $\Der(M_f)_k=0$ for every $k<0$. The conjecture arose in the study of derivations of moduli algebras and their relation to quasihomogeneity; see \cite[p.~244]{CXY95} and \cite{XY96}. Chen, Xu and Yau proved the cases of two and three variables \cite[Theorems~2.2 and~2.3]{CXY95}. Chen proved the case of four variables \cite[Theorem~2.1]{Chen95}. Yau and Zuo proved the conjecture under the weight conditions $d\geq2w_1\geq\cdots\geq2w_n>0$ and $w_1\leq2w_n$ \cite[Main Theorem, p.~21]{YZ16}. Chen, Yau and Zuo proved it for weighted homogeneous fewnomial singularities of multiplicity at least five \cite[Main Theorem~C]{CYZ19}. Here a fewnomial defining polynomial has exactly $n$ monomials in $n$ variables, and multiplicity means the least ordinary total degree \cite[Definition~9.1]{CYZ19}. Chen and Yau obtained the sufficient bound $d>W^2-W+1$ \cite[Theorem~1.5]{CY26}.

Yau's conjecture is the Jacobian-algebra case of Halperin's conjecture. The latter asserts that a positively graded Artinian complete intersection over a field of characteristic zero has no nonzero derivation of negative degree.
\end{bluepassage}

\begin{maintheorem}
Let $f\in\C\{x_1,\ldots,x_n\}$ be weighted homogeneous with positive weights and have an isolated critical point at $0$. Then
\begin{align*}
\Der(M_f)_k=0 \qquad\text{for every }k<0.
\end{align*}
\end{maintheorem}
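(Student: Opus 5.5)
The plan is to argue by contradiction. Suppose $\delta\in\Der(M_f)_k$ is nonzero with $k<0$. Write $A=M_f$, a graded Artinian Gorenstein complete intersection with socle in degree $s=\sum_i(d-2w_i)$, generated by the Hessian $\det(\partial_i\partial_jf)$. First I would lift $\delta$ to the polynomial ring. Set $D=\sum_i g_i\partial_i$ on $R=\C[x_1,\ldots,x_n]$ with $g_i$ weighted homogeneous of degree $w_i+k$, where $g_i=0$ if $w_i+k<0$, chosen so that $D$ induces $\delta$. Such a lift exists because $R$ is a free polynomial algebra. The condition that $D$ preserves the Jacobian ideal $J=(\partial_1f,\ldots,\partial_nf)$ reads
\[
D(\partial_jf)=\sum_i a_{ij}\,\partial_if,
\]
with $a_{ij}$ homogeneous of degree $w_i-w_j+k$. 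Since $k<0$, the matrix $(a_{ij})$ is strictly "lower triangular" with respect to the weight ordering. In particular, $a_{ij}=0$ whenever $w_i\le w_j-k$ fails in the appropriate direction.

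Second, I would use the Gorenstein pairing to move $\delta$ to the socle. A derivation of negative degree kills the socle for degree reasons, so it suffices to show that $\delta$ vanishes on the generators $x_i$. To do this I would pass to the trace: compose with the residue (Grothendieck) pairing $\operatorname{Res}\big[\,\cdot\,dx/(\partial_1f\cdots\partial_nf)\big]$. The key identity to aim for is that for a derivation preserving $J$, the residue satisfies
\[
\operatorname{Res}(D(h))=\operatorname{Res}\Big(h\cdot\big(\textstyle\sum_i\partial_ig_i-\operatorname{tr}(a_{ij})\big)\Big).
\]
This is a divergence formula obtained by differentiating the residue symbol along the vector field. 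Since $\operatorname{tr}(a)$ has degree $k<0$, it vanishes. The divergence $\sum\partial_ig_i$ likewise has degree $k<0$ and vanishes. Hence $\operatorname{Res}(D(h))=0$ for all $h$, which says that $\delta$ is "traceless" against the pairing. Combined with nondegeneracy, this gives $\operatorname{Res}(\delta(a)b)=-\operatorname{Res}(a\,\delta(b))$.

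Third, from this skew-adjointness I would derive nilpotency and contradiction via a Koszul-type or Euler-type argument. The Euler derivation $E=\sum w_ix_i\partial_i$ acts on $A$ by the grading, and $[E,\delta]=k\delta$. So $\delta$ is a lowering operator. One would show that $\delta(x_i)$ lies in $J$ by induction on the weights, starting from the largest weight $W$. For the top weights, the equation $D(\partial_jf)\in J$, restricted to the lowest-degree pieces, forces $g_i$ to be a combination of Jacobian partials, using the regularity of the sequence $\partial_jf$ together with a graded Koszul syzygy argument in the relevant low degrees. One then peels off the variables in decreasing weight order, replacing $f$ by its restriction, which is still isolated after a generic coordinate change preserving weights.

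The main obstacle is exactly this inductive step. It is where previous partial results needed bounds such as $d>W^2-W+1$, because low-degree syzygies of $J$ are no longer only Koszul ones once $d$ is small relative to the weights. My first attempt would be to exploit the skew-adjointness from the second step together with the Hessian generating the socle. The goal there is to show $\delta(\operatorname{Hess})$ lowers the socle and to use $\partial_i$-derivatives of the Hessian to propagate vanishing down the grading. If that failed, I would try a deformation argument: deform $f$ within weighted homogeneous isolated singularities, where the dimension of $\Der(A)_k$ is upper semicontinuous, to a Sebastiani–Thom sum of fewnomials, for which the known cases apply.
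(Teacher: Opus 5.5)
\paragraph{There is a genuine gap.} The step that would prove the theorem is only announced, and the steps you do carry out contain no information.

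\emph{The residue step is vacuous.} Let $\sigma$ be the socle degree and let $\delta$ have degree $k<0$. Then $\delta(h)$ has a nonzero socle component only if $h$ has a component in $A_{\sigma-k}$. But $A_{\sigma-k}=0$, because $\sigma-k>\sigma$. So $\operatorname{Res}(\delta(h))=0$ holds for \emph{every} negative-degree derivation, with no divergence formula needed. Your skew-adjointness is then just the Leibniz rule applied to $\operatorname{Res}(\delta(ab))=0$. A hypothetical nonzero negative derivation would satisfy it too, so it cannot produce a contradiction. Likewise, ``it suffices to check $\delta$ on the $x_i$'' is a tautology.

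\emph{The third step is the whole conjecture.} Showing $\delta(x_i)=0$ in $A$ for all $i$ is exactly $\delta=0$. The low-degree syzygy analysis you sketch is the method that only gives conditional results such as $d>W^2-W+1$. The peeling is also unavailable as stated. Take $f=x^3y+y^3$ with weights $(2,3)$ and $d=9$. The only weight-preserving automorphisms are diagonal scalings, and restricting to $y=0$, the top-weight variable, gives the zero polynomial.

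\emph{The fallback runs the wrong way.} Upper semicontinuity of $\dim\Der(A_t)_k$ means the dimension can only jump \emph{up} at special members of a family. Vanishing at a fewnomial therefore gives vanishing only on a Zariski-open set of polynomials of that weight type. The given $f$ may lie in the closed jump locus, which is precisely the case to exclude. Moreover, the fewnomial results you would invoke need multiplicity at least five.

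\paragraph{What the paper does instead.} It replaces pairing and syzygy computations with a geometric argument.
\begin{enumerate}
\item After splitting off quadratic variables, so that $f\in\m^3$ and $d>2W$, it takes a nonzero $g$ of least degree in $\mathcal V=\{g\in J:\partial_ig\in J\}$. The corresponding lift $D$ satisfies $D(J)\subseteq J$ and $D^2f=0$ (Proposition~\ref{prop:minimum}).
\item Let $x_k$ be a least-weight variable with $D(x_k)\ne0$, and put $r=\deg D(x_k)$ and $I_r=(f_i:w_i>r)$. One gets $D^2f_i\in I_r$ for all $i$.
\item Lemma~\ref{lem:orbit}, applied to $S/I_r$, shows that $D$ induces zero on $(S/I_r)_{\red}$. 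Otherwise, an orbit of $\exp(tD)$, rescaled by the weighted $\C^*$-action and passed to the limit, gives a nonzero point of $\Spec(S/I_r)$ where all $f_i$ vanish. This is impossible because $S/J$ is finite dimensional.
\item Hence $0\ne D(x_k)\in\sqrt{I_r}\cap S_r$. Theorem~\ref{thm:radical} shows this intersection is zero.
\begin{itemize}
\item For $3r<d$, the deformation $f+ta^2$ has constant Milnor number. Varchenko's description of the $\mu$-constant stratum then forces $a^2\in J$, which Lemma~\ref{lem:square} excludes.
\item For $3r\ge d$, the heavy variables occur at most quadratically. Their gradient equations are uniquely solvable over $\C(u)$, so $(f_y)$ cannot contain a power of $a$.
\end{itemize}
\end{enumerate}
The normalization $D^2f=0$, the orbit-and-scaling limit, and the $\mu$-constant input are the ideas your plan lacks.
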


\begin{bluepassage}
Chen, Liu, Yau and Zuo \cite{CLYZ26} also give a proof of the conjecture, over any field of characteristic zero. Their main argument establishes a degree inequality for homogeneous polynomials whose Jacobian ideals are contained in that of $f$. Both proofs use flows of negative-degree derivations and weighted scaling. The proof here uses a different intermediate result concerning radicals of ideals generated by selected partial derivatives.

To state this result, write $S=\C[x_1,\ldots,x_n]$, $f_i=\partial_if$, $J=(f_1,\ldots,f_n)$ and $\m=(x_1,\ldots,x_n)$. Let $S_r$ be the degree-$r$ part of $S$. Weight-preserving coordinate changes and removal of quadratic variables reduce the proof to $S/J=\C$ or to
\(f\in\m^3\), \(d>2W\).
In the latter case, Theorem~\ref{thm:radical} proves that
\[
\sqrt{(f_i:w_i>r)}\cap S_r=\{0\}\qquad(r\in\Z_{>0}).
\]
For $3r<d$, a polynomial $a$ in this intersection would give a deformation $f+ta^2$ with constant Milnor number. The description of the $\mu$-constant stratum in \cite[Theorem~14.8, p.~415]{AGV88} then gives $a^2\in J$. A separate degree argument excludes this membership when $a\ne0$. For $3r\geq d$, the variables of weight greater than $r$ occur at most quadratically, and the radical statement follows by solving their linear gradient equations over the field of fractions of the remaining polynomial ring.

After the quadratic variables have been removed, Lemma~\ref{lem:lift} gives a unique homogeneous polynomial lift of each negative-degree derivation of $S/J$. If such a derivation is nonzero, Proposition~\ref{prop:minimum} produces a nonzero lift $D$ with $D(J)\subseteq J$ and $D^2f=0$. Lemma~\ref{lem:orbit} uses the polynomial orbits of a locally nilpotent derivation and weighted scaling to show that $D$ acts trivially on a suitable reduced quotient. The coefficient of a variable of least weight moved by $D$ then belongs to the intersection excluded by Theorem~\ref{thm:radical}. The orbit lemma applies to finitely generated graded algebras that need not be reduced, and the locally nilpotent derivation in that lemma need not be homogeneous.

Section~\ref{sec:splitting} gives the quadratic reduction and the choice of $D$. Section~\ref{sec:radicals} proves the radical statement. Section~\ref{sec:mainproof} proves the orbit lemma and completes the proof of the Main Theorem. Appendices~\ref{app:local} and~\ref{app:derivations} record the algebraic facts used in the proof.

\paragraph{Formal verification.}
All results proved in this paper have been formally verified in Lean. The standard theorem of Arnold, Gusein-Zade and Varchenko \cite[Theorem~14.8, p.~415]{AGV88} is used as the sole unformalized external mathematical input. All remaining steps in the proof of the Main Theorem have been formally verified. The formalization was completed jointly with Quan Zhang. The code is available at \url{https://github.com/GuoruiMaGRM/Yau-Conjecture}.
\end{bluepassage}

\Needspace{9\baselineskip}
\subsection*{\blueedit{AI Declaration}}
\begin{bluepassage}
ChatGPT 6 suggested applying Varchenko's result on the $\mu$-constant stratum. It also identified the formulation in Arnold, Gusein-Zade and Varchenko \cite[Theorem~14.8, p.~415]{AGV88} as a more explicit reference than the original statement for the result needed in Lemma~\ref{thm:varchenko}. ChatGPT 6 was also used to correct grammar and improve the exposition. The author is responsible for the mathematical content, the references, and the final text.
\end{bluepassage}

\section{\rev{Removal of quadratic variables and the choice of \texorpdfstring{$D$}{D}}}\label{sec:splitting}
\begin{bluepassage}
We use the notation $S$, $\m$, $f_i$, $J$ and $W$ introduced above, and set $A=S/J$. For $a\in\Z$, let $S_a$ be the space spanned by monomials of weight $a$. Then $S_0=\C$ and $S_a=0$ for $a<0$. Unless ordinary degree is specified, degree means weighted degree. Lemma~\ref{lem:analytic} identifies $A$ with the analytic Milnor algebra. For a ring $R$, we write $R_{\red}=R/\sqrt{(0)}$.
\end{bluepassage}

\rev{A coordinate change in this section means a weight-preserving polynomial automorphism of $S$. It is therefore a graded automorphism.} By the chain rule, its invertible Jacobian matrix maps one gradient ideal to the other. It therefore induces \rev{a graded isomorphism} of Milnor algebras.

\begin{lemma}\label{lem:splitting}
\rev{Let $f$ be a weighted homogeneous polynomial with an isolated critical point at $0$. After weight-preserving coordinate changes and removal of the quadratic variables, the graded algebra $A=S/J$ is isomorphic either to $\C$ or to the Milnor algebra of a weighted homogeneous polynomial, again denoted by $f$, satisfying}
\begin{align*}
f\in\m^3,\qquad d>2W,\qquad W=\max_iw_i.
\end{align*}
The remaining polynomial has an isolated critical point.
\end{lemma}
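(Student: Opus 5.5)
We split off the quadratic part of $f$ by a weighted splitting lemma, and then show that the remaining piece satisfies the two conditions directly.

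Write $f=f_2+f_{\ge3}$, where $f_2$ is the quadratic part in ordinary degree. Since $f$ is weighted homogeneous of degree $d$, a monomial $x_ix_j$ occurs in $f_2$ only if $w_i+w_j=d$.

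\textbf{Step 1 (weighted splitting).} Group the variables by weight. The quadratic form $f_2$ pairs variables of weight $a$ with variables of weight $d-a$.
\begin{enumerate}
\item For $a\neq d/2$, the pairing matrix between the weight-$a$ block and the weight-$(d-a)$ block can be brought to a partial identity by linear changes within each block. Such changes are weight-preserving.
\item For $a=d/2$, the block is an ordinary symmetric form, which we diagonalize within that weight space.
\end{enumerate}
This gives $f_2=\sum_k y_kz_k+\sum_l u_l^2$, with $w(y_k)+w(z_k)=d$ and $w(u_l)=d/2$. The remaining variables are denoted $x'$.

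Next we complete squares weight-homogeneously. The terms of $f_{\ge3}$ that are linear in $z_k$ have the form $z_k\,g_k$, with $g_k$ of weight $w(y_k)$ and of ordinary degree at least $2$. Substituting $y_k\mapsto y_k-g_k$ is a weight-preserving triangular polynomial automorphism. We iterate, ordering by ordinary degree, in the same way as the standard proof of the splitting lemma.

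The key point is that positivity of the weights bounds the ordinary degree of any monomial of weight at most $d$. So the process terminates after finitely many steps in a polynomial automorphism, and no formal power series are needed. The end result is
\[
f=\sum_k y_kz_k+\sum_l u_l^2+g(x'),\qquad g\in\m'^3,
\]
where $g$ is weighted homogeneous of degree $d$.

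We expect the main obstacle to be making this completion of squares polynomial and graded, rather than merely analytic. We plan to handle it by induction on ordinary degree inside the finite-dimensional space $S_d$.

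\textbf{Step 2 (the Milnor algebra).} The gradient ideal of $f$ is generated by $z_k$, $y_k$, $2u_l$ and $\partial g/\partial x'_j$. Hence $A\cong \C[x']/J_g$ as graded algebras, and the isomorphism respects the induced weights. Since $A$ is finite-dimensional, $g$ has an isolated critical point. If there are no variables $x'$, then $A\cong\C$.

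\textbf{Step 3 ($d>2W$).} Assume variables $x'$ remain, and let $W$ now denote their maximal weight. We argue by contradiction, so suppose some $x'_j$ has $2w_j\ge d$. Since $g\in\m'^3$ with positive weights, no monomial of weight $d$ in $g$ can involve $x'_j$: such a monomial would need ordinary degree at least $3$, giving weight greater than $w_j+2\min w\ge\ldots$. More simply, a monomial of ordinary degree at least $3$ containing $x'_j$ has weight greater than $w_j$. If $w_j\ge d$ this is already impossible. If $d/2\le w_j<d$, the monomial must be $x'_j m$ with $m$ of weight $d-w_j\le w_j$ and $m\in\m'^2$.

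In either case, $\partial g/\partial x'_j\in\m'^2$. For an isolated critical point, however, the ideal $J_g$ must be $\m'$-primary.

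\begin{itemize}
\item If $w_j>d/2$, then $x'_j$ occurs in $g$ at most linearly. Setting the other variables to $0$ gives a line $\{x'\,:\,x'_i=0 \text{ for } i\neq j\}$ on which every partial derivative vanishes. This contradicts isolatedness.
\item If $w_j=d/2$, then $x_j'^2$ does not occur in $g$, because $g\in\m'^3$. The same line argument then applies.
\item If $w_j\ge d$, then $x'_j$ does not occur in $g$ at all, and the same argument applies.
\end{itemize}

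Hence $2W<d$, which completes the proof.
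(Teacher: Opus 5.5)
Your argument is correct, but it takes a different route from the paper. You first bring the whole ordinary quadratic part to the normal form $\sum_k y_kz_k+\sum_l u_l^2$. You then run the splitting lemma as an iterated, weight-preserving completion of squares, with termination coming from the bound $d/\min_iw_i$ on the ordinary degree of monomials in $S_d$. Only afterwards do you prove $d>2W$, separately, by the critical-line argument of Step~3.

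The paper avoids both the normalization and the iteration by working down from the top weight. A variable $u$ of weight $W>d/2$ occurs at most linearly, so $f=uL(z)+F(z)$. The critical-line argument forces $L$ to have a linear term $v$. The two explicit triangular substitutions $v'=L$ and $u'=u+(F(v,z')-F(0,z'))/v$ then split off $u'v$ exactly, in one step. Once all weights are at most $d/2$, the weight-$d/2$ variables occur at most quadratically with a constant Hessian $H$. Isolatedness makes $H$ invertible, and the single substitution $u'=u+H^{-1}a(z)$ removes them. After that, $f\in\m^3$ and $d>2W$ hold automatically, because every surviving weight is below $d/2$. The role of your Step~3 is played there by the existence of $v$ and the invertibility of $H$. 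So ordering by weight gives exact one-step eliminations with no termination bookkeeping, while your route is the generic splitting lemma transplanted to the graded setting.

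When you write out Step~1, note that killing only the terms linear in $z_k$ is not enough. At each stage, take the entire part of lowest ordinary degree $\kappa\geq3$ lying in the ideal $(y,z,u)$. Write it as $\sum_kz_k\alpha_k+\sum_ky_k\beta_k+\sum_lu_l\gamma_l$, with coefficients homogeneous for both the weight and the ordinary degree. Then substitute $y_k\mapsto y_k-\alpha_k$, $z_k\mapsto z_k-\beta_k$, $u_l\mapsto u_l-\gamma_l/2$. Each coefficient has the weight of the variable it modifies and ordinary degree $\kappa-1\geq2$. It therefore involves only variables of strictly smaller weight, so the map is a graded triangular automorphism. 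Apart from the intended cancellation, every new term has ordinary degree at least $2\kappa-2\geq\kappa+1$.

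Also, the opening sentences of Step~3 are not valid as stated. These are the claim that no monomial of $g$ involves $x'_j$, and the suggestion that $\partial g/\partial x'_j\in\m'^2$ contradicts $\m'$-primariness. Only the bullet-point critical-line argument does the work there, and that argument is correct.
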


\begin{proof}
First suppose that a variable $u$ has maximal weight $W>d/2$. No monomial of $f$ contains $u$ twice. Write the other variables as $z$. Then
\begin{align*}
f=uL(z)+F(z),\qquad \deg L=d-W.
\end{align*}
We have $L(0)=0$, since the origin is critical. The polynomial $L$ must have a nonzero linear term. Otherwise all derivatives of $f$ vanish on the $u$-axis: the $u$ derivative is $L(0)=0$, and each $z$ derivative is $uL_{z_i}(0)+F_{z_i}(0)=0$. That would contradict isolatedness.

Choose a variable $v$ occurring linearly in $L$. Its weight is $d-W<W$. After a linear change among variables of weight $d-W$, followed by a nonzero scalar change, write $L=v+P(z')$, where $z'$ denotes all other variables. Every nonlinear monomial of weight $d-W$ uses only variables of smaller weight. Thus $P$ does not involve $v$, and the replacement $v'=L$ is a weight-preserving polynomial automorphism. Rename $v'$ as $v$. Now $f=uv+F(v,z')$.

The polynomial $F(v,z')-F(0,z')$ is divisible by $v$. Put
\begin{align*}
K(v,z')=\frac{F(v,z')-F(0,z')}{v},\qquad u'=u+K(v,z').
\end{align*}
The degree of $K$ is $W$. It is independent of $u$. Hence this is another weight-preserving polynomial automorphism, and
\begin{align*}
f=u'v+F(0,z').
\end{align*}
Its gradient ideal contains $u'$ and $v$. Its graded Milnor algebra is exactly the graded Milnor algebra of $F(0,z')$. The critical set is the product of the origin in the $(u',v)$ variables with the critical set of $F(0,z')$. Thus the latter critical point is isolated. Repeat this operation. Each step removes two variables, so it stops. Every remaining weight is at most $d/2$.

If some remaining variables have weight $d/2$, collect them in a column vector $u$ and write the other variables as $z$. The weighted degree condition gives
\begin{align*}
f=\tfrac12u^{\mathsf T}Hu+u^{\mathsf T}a(z)+b(z),
\end{align*}
where $H$ is a constant symmetric matrix. Each entry of $a$ has weight $d/2$. Since every $z$ variable has smaller weight, $a$ has no constant or linear terms. 

The matrix $H$ is invertible. If $0\ne v\in\ker H$, then $(u,z)=(tv,0)$ is critical for every $t$. Indeed, the $u$ derivatives are $Hu+a(0)=0$, and the $z$ derivatives vanish because $a$ has no linear terms and $db(0)=0$. This would again contradict isolatedness.

Set $u'=u+H^{-1}a(z)$. This preserves the weights and gives
\begin{align*}
f=\tfrac12(u')^{\mathsf T}Hu'+b(z)-\tfrac12a(z)^{\mathsf T}H^{-1}a(z).
\end{align*}
The gradient ideal of the first summand is the ideal generated by the entries of $u'$. Removing these variables therefore preserves the graded Milnor algebra and isolatedness.

All remaining weights are strictly less than $d/2$. Hence no monomial of ordinary degree two has weight $d$. There are also no constant or linear terms. The remaining polynomial belongs to $\m^3$ and satisfies $d>2W$. If no variables remain, the quotient algebra is $\C$.
\end{proof}

\rev{We reuse the symbols $x_i,w_i,f,S,J,A$ for the remaining variables, weights, polynomial, and algebras after splitting. From now on, unless otherwise stated, $f\in\m^3$ and $d>2W$. In particular, every $f_i$ has weighted degree at least $d-W>W$.}

Define the graded vector space
\begin{align*}
\mathcal V=\{g\in J:\partial_i g\in J\text{ for every }i\},\qquad
\mathcal V_q=\mathcal V\cap S_q.
\end{align*}

\begin{lemma}\label{lem:potential}
\begin{revision}
For an integer $q$ with $0<q<d$, the map
\begin{align*}
\Der(A)_{q-d}&\longrightarrow\mathcal V_q,\\
\delta&\longmapsto Df,
\end{align*}
is a linear bijection, where $D$ is the unique homogeneous polynomial lift of $\delta$ given by Lemma~\ref{lem:lift}.
\end{revision}
\end{lemma}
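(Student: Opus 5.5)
We need to understand Lemma lift (not yet seen): presumably each negative-degree derivation δ of A of degree k<0 has a unique homogeneous polynomial lift D=\sum_i a_i\partial_i with a_i\in S_{w_i+k} and D(J)\subseteq J. We assume it states exactly that: for k<0, every δ in \(\Der(A)_k\) lifts to a unique homogeneous derivation D of S of degree k with D(J)\subseteq J, and conversely each such D induces an element of \(\Der(A)_k\). Uniqueness presumably uses that J has no elements in degrees below \(d-W>W\), so the coefficients \(a_i\), of degree \(w_i+k<W\), are determined modulo \(J\cap S_{w_i+k}=0\).

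The plan is to verify three things in turn: that the map lands in \(\mathcal V_q\), that it is injective, and that it is surjective.

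\emph{Well-definedness.} For \(\delta\in\Der(A)_{q-d}\) with lift \(D=\sum_i a_i\partial_i\), Euler's identity gives \(df=\sum_i w_ix_if_i\in J\). Hence \(Df\in D(J)\subseteq J\), and \(Df\) is homogeneous of degree \(d+(q-d)=q\). For the derivatives, use the commutator \(\partial_jD=D\partial_j+\sum_i(\partial_ja_i)\partial_i\). This gives
\[
\partial_j(Df)=D(f_j)+\sum_i(\partial_ja_i)f_i,
\]
and both terms lie in \(J\). So \(Df\in\mathcal V_q\), and the map is clearly linear.

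\emph{Injectivity.} Suppose \(Df=0\). The identity above then says \(D(f_j)\in J\) automatically, which gives no information. Instead use degrees: \(Df=\sum_i a_if_i\) with \(\deg a_i=w_i+q-d<w_i\). I would argue that this forces every \(a_i\) to vanish. Since \(f_1,\dots,f_n\) is a regular sequence, the syzygies of \((f_i)\) are Koszul. Any homogeneous syzygy \((a_i)\) therefore has each \(a_i\) in the ideal generated by the \(f_k\), \(k\ne i\). But \(\deg a_i<w_i\le W<d-W\le\deg f_k\), so \(a_i=0\). Thus \(D=0\), and \(\delta=0\) by uniqueness of the lift.

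\emph{Surjectivity.} This is the main step. Given \(g\in\mathcal V_q\), write \(g=\sum_i a_if_i\) with \(a_i\) homogeneous of degree \(q-d+w_i\). If \(q<d-w_i\), then \(a_i=0\). Each \(a_i\) is determined modulo Koszul syzygies, which live in degrees \(\ge \deg f_k\); by the same degree count these are zero, so the \(a_i\) are unique. Set \(D=\sum_i a_i\partial_i\), which has degree \(q-d<0\). Then
\[
D(f_j)=\partial_jg-\sum_i(\partial_ja_i)f_i\in J,
\]
so \(D(J)\subseteq J\), and \(D\) induces some \(\delta\in\Der(A)_{q-d}\). By uniqueness in Lemma~\ref{lem:lift}, \(D\) is the lift of \(\delta\), and \(Df=g\).

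The expected obstacle is exactly the uniqueness and degree bookkeeping: checking that \(\deg a_i<\min_k\deg f_k\) holds in every case. This uses \(w_i+q-d<w_i\le W<d-W\), and so relies on the normalization \(d>2W\) from Lemma~\ref{lem:splitting}. It must also match the precise form of Lemma~\ref{lem:lift}, whose statement the plan assumes as described above.
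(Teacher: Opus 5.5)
Your proof is correct and follows the paper's argument. Both use the product-rule identity $\partial_j(Df)=D(f_j)+\sum_i(\partial_j a_i)f_i$ in both directions and get injectivity from the absence of low-degree syzygies among the $f_i$; your appeal to the Koszul description of syzygies of the regular sequence replaces Lemma~\ref{lem:syzygy} with the same degree count. Identifying your constructed $D$ with the lift of Lemma~\ref{lem:lift} is also sound, because $J$ has no nonzero elements in degrees $w_i+q-d<W$, so a homogeneous lift is unique even without imposing $h_i\in\m$ (the paper instead verifies $h_i\in\m$ explicitly via Lemma~\ref{lem:maximal}).
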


\begin{proof}
\rev{Let $\delta\in\Der(A)_{q-d}$ and write its unique lift as $D=\sum_jh_j\partial_j$. Lemma~\ref{lem:lift} gives $D(J)\subseteq J$.} Then $Df=\sum_jh_jf_j\in J$. The product rule gives, for every $i$,
\begin{align*}
\partial_i(Df)=D(f_i)+\sum_j(\partial_i h_j)f_j\in J.
\end{align*}
Thus $Df\in\mathcal V_q$.

Conversely, let $g\in\mathcal V_q$. Since $g\in J$, taking homogeneous components gives $g=\sum_jh_jf_j$ with $h_j\in S_{q-d+w_j}$. Define $D=\sum_jh_j\partial_j$. The same identity gives
\begin{align*}
D(f_i)=\partial_i g-\sum_j(\partial_i h_j)f_j\in J.
\end{align*}
Thus $D$ preserves $J$ and induces \rev{a derivation $\delta\in\Der(A)_{q-d}$.} Lemma~\ref{lem:maximal} excludes nonzero constant coefficients. The other coefficients have positive degree, so all belong to $\m$. They are the unique lifts in Lemma~\ref{lem:lift}. Lemma~\ref{lem:syzygy} proves uniqueness of the expression for $g$, and hence injectivity of the map.
\end{proof}

\begin{lemma}\label{lem:stable}
If $D$ is any polynomial derivation with $D(J)\subseteq J$, then $D(\mathcal V)\subseteq\mathcal V$.
\end{lemma}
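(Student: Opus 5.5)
We must show: if $g\in\mathcal V$ and $D(J)\subseteq J$, then $Dg\in J$ and $\partial_i(Dg)\in J$ for every $i$. The plan is a direct computation using the commutator of $D$ with the coordinate derivations $\partial_i$. Write $D=\sum_j h_j\partial_j$ with $h_j\in S$. No homogeneity of $D$ is needed, and $\mathcal V$ is treated as the full subspace of $J$ rather than only its graded pieces.

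First, $g\in\mathcal V\subseteq J$ and $D(J)\subseteq J$ give $Dg\in J$ immediately.

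Second, for the partial derivatives we use the identity
\begin{align*}
\partial_i(Dg)=D(\partial_i g)+\sum_j(\partial_i h_j)\,\partial_j g,
\end{align*}
which is the product rule applied to $\partial_i\bigl(h_j\partial_j g\bigr)$ together with the commutation of $\partial_i$ and $\partial_j$. This is the same identity used in the proof of Lemma~\ref{lem:potential}, with $f$ replaced by $g$. We then check each term separately.
\begin{enumerate}
\item Since $g\in\mathcal V$, we have $\partial_i g\in J$, so $D(\partial_i g)\in J$ because $D$ preserves $J$.
\item Each $\partial_j g$ lies in $J$, again by the definition of $\mathcal V$. Since $J$ is an ideal, each $(\partial_i h_j)\partial_j g$ lies in $J$.
\end{enumerate}
Hence $\partial_i(Dg)\in J$ for every $i$, so $Dg\in\mathcal V$.

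There is essentially no obstacle. The only point to notice is that the correction terms in the commutator $[\partial_i,D]=\sum_j(\partial_i h_j)\partial_j$ involve first derivatives of $g$, and these lie in $J$ precisely by the definition of $\mathcal V$. If one wants graded statements, note also that for a homogeneous $D$ of degree $k$ this gives $D(\mathcal V_q)\subseteq\mathcal V_{q+k}$.
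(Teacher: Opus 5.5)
Your proof is correct and is essentially the paper's argument. You use the same commutator identity $\partial_i(Dg)=D(\partial_i g)+\sum_j(\partial_i h_j)\partial_j g$, and you check the two kinds of terms in $J$ exactly as the paper does.
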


\begin{proof}
For $g\in\mathcal V$, we have $Dg\in J$. With $D=\sum_jh_j\partial_j$, differentiation gives
\begin{align*}
\partial_i(Dg)=D(\partial_i g)+\sum_j(\partial_i h_j)(\partial_j g).
\end{align*}
The first term belongs to $J$ because $\partial_i g\in J$ and $D$ preserves $J$. Every term in the sum belongs to $J$ because $\partial_jg\in J$. Hence $Dg\in\mathcal V$.
\end{proof}

\begin{proposition}\label{prop:minimum}
If $A$ has a nonzero homogeneous derivation of negative degree, then there is a nonzero homogeneous polynomial derivation $D$ of degree $-s$, with $s>0$, such that
\begin{align*}
D(J)\subseteq J,\qquad 0\ne g=Df,\qquad Dg=D^2f=0.
\end{align*}
Every coefficient of $D$ belongs to $\m$, and $D$ is locally nilpotent.
\end{proposition}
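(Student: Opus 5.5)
The plan is to start from an arbitrary nonzero negative-degree derivation and use the bijection of Lemma~\ref{lem:potential} together with the stability of Lemma~\ref{lem:stable} to move to an extremal one. Let $\delta\in\Der(A)_{-s_0}$ be nonzero with $s_0>0$, and let $D_0$ be its unique homogeneous lift from Lemma~\ref{lem:lift}. Then $D_0(J)\subseteq J$, and every coefficient of $D_0$ lies in $\m$. Because $A$ is concentrated in degrees $0,\dots,\deg\mathrm{Hess}<d$, a nonzero derivation has degree $q-d$ with $0<q<d$. Lemma~\ref{lem:potential} then gives $0\neq g_0=D_0f\in\mathcal V_q$.

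The main step is an extremal choice. Among all pairs $(s,D)$ in which $D$ is a nonzero homogeneous lift of a derivation of $A$ of degree $-s<0$, choose one where the degree $d-s$ of $g=Df$ is minimal. This is possible because these degrees lie in the finite set $(0,d)$. Since $D(J)\subseteq J$, Lemma~\ref{lem:stable} gives $Dg\in\mathcal V$, and $Dg$ is homogeneous of degree $d-2s$. If $Dg\neq0$, then $d-2s>0$, and Lemma~\ref{lem:potential} applied with $q'=d-2s$ produces a derivation $D'$ of degree $-2s$ with $D'f=Dg\neq0$. Its degree $q'=d-2s$ is smaller than $d-s$, which contradicts minimality. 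If $d-2s\le0$, then $Dg$ is a polynomial of degree $\le 0$ lying in $J$. Since $J\subseteq\m$ and $S_{<0}=0$, this forces $Dg=0$. So in every case $D^2f=Dg=0$. The step I expect to need the most care is exactly this one: checking that a nonzero element of $\mathcal V_{q'}$ really comes from a nonzero derivation of degree $-2s$, which is what Lemma~\ref{lem:potential} guarantees when $0<q'<d$. The constraint that nonzero degrees lie strictly between $0$ and $d$ should follow from $d>2W$ and $f\in\m^3$, which make $J\subseteq\m^2$ and give $\mathcal V_q=0$ for $q\le0$.

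The remaining claims follow from the choice of lift. Coefficients in $\m$ come directly from Lemma~\ref{lem:potential}: the lift has no constant coefficients (Lemma~\ref{lem:maximal}), and the other coefficients have positive degree. For local nilpotence, $D$ is homogeneous of degree $-s<0$, so $D^k(S_a)\subseteq S_{a-ks}$, which is $0$ once $ks>a$. Since every polynomial is a finite sum of homogeneous pieces, $D$ is locally nilpotent.
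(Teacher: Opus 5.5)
Your argument is essentially the paper's. You take a nonzero element of $\mathcal V$ of least degree, realize it as $Df$ through the bijection of Lemma~\ref{lem:potential}, and use Lemma~\ref{lem:stable} together with the degree drop from $d-s$ to $d-2s$ to force $D^2f=0$. When $d-2s\le 0$ you use instead that $J$ has no nonzero elements of nonpositive degree. Minimizing over nonzero derivations rather than over nonzero spaces $\mathcal V_q$ is the same choice, since the bijection matches them degree by degree.

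One justification is wrong. You obtain $0<q<d$ from ``$A$ is concentrated in degrees $0,\dots,\deg\mathrm{Hess}<d$.'' The top degree of $A$ is $\sum_i(d-2w_i)$, which need not be below $d$. For $f=x^3+y^3+z^3$, which satisfies $f\in\m^3$ and $d>2W$, it equals $d=3$; for $x^4+y^4+z^4$ it is $6>4$. Your fallback remark that $\mathcal V_q=0$ for $q\le0$ does not repair this. Injectivity of $\delta\mapsto Df$ is supplied by Lemma~\ref{lem:potential} only once $0<q<d$ is already known.

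The correct reason is elementary. The derivation $\delta$ is determined by the classes $\delta([x_i])$, which have degrees $w_i-s_0$, so $\delta\ne0$ forces $s_0\le W<d$. Alternatively, as in the paper, Lemma~\ref{lem:syzygy} gives $D_0f\ne0$ directly since $d-s_0<2(d-W)$, and a nonzero element of $J$ has positive degree. With this replacement your proof is complete. Note also that the extremal $D$, being a nonzero lift, again has $Df\ne0$ by injectivity.
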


\begin{proof}
\rev{Let $0\ne\delta\in\Der(A)_{-s_0}$ with $s_0>0$, and let $D_0$ be its lift from Lemma~\ref{lem:lift}. The lift is nonzero, and $D_0f\ne0$ by Lemma~\ref{lem:syzygy}, since its total weighted degree is $d-s_0<d<2(d-W)$. The product rule gives $D_0f\in\mathcal V$. As $J$ has no nonzero element of nonpositive degree, $0<d-s_0<d$. Hence $\mathcal V_q\ne0$ for some integer $q$ with $0<q<d$.}

Choose the least such integer $q$ and a nonzero $g\in\mathcal V_q$. Lemma~\ref{lem:potential} gives a derivation $D$ with $Df=g$ and degree $-s=q-d<0$. Lemma~\ref{lem:stable} gives $Dg\in\mathcal V$. Its degree is $q-s<q$. If that degree is positive, minimality of $q$ implies $Dg=0$. If it is nonpositive, membership in $J$ implies the same conclusion. The remaining assertions follow from Lemma~\ref{lem:lift}.
\end{proof}

\section{\rev{Radicals of ideals generated by selected partial derivatives}}\label{sec:radicals}

\begin{lemma}\label{lem:quadratic}
Let $F(z,Y)$ be a convergent power series germ (or polynomial), where $z=(z_1,\ldots,z_p)$ and $Y=(Y_1,\ldots,Y_q)$, with $q\geq1$. Suppose that $F$ has an isolated critical point at $0$, belongs to $(z,Y)^3$, and is polynomial of ordinary degree at most two in $Y$. Write
\begin{align*}
F(z,Y)=\tfrac12Y^{\mathsf T}H(z)Y+L(z)^{\mathsf T}Y+c(z),\qquad H(z)^{\mathsf T}=H(z).
\end{align*}
Then $\det H(z)$ is not the zero power series (or zero polynomial). If $k(z)$ is a column of $q$ convergent power series (or polynomials) and $k(z)^{\mathsf T}F_Y(z,Y)$ is independent of $Y$, then $k=0$. Here $F_Y$ is the column of the $Y$ derivatives.
\end{lemma}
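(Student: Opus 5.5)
The gradient in $Y$ is $F_Y(z,Y)=H(z)Y+L(z)$. The second assertion will reduce to the first, so I would prove $\det H\neq 0$ first.

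\textbf{Step 1: $\det H\not\equiv0$.} Suppose $\det H(z)$ is identically zero. Then $H$ has a nonzero kernel over the field of fractions of the coefficient ring. Clearing denominators gives a nonzero column $v(z)$ of power series (or polynomials) with $H(z)v(z)=0$. I would try to produce a curve of critical points through the origin, contradicting isolatedness. The natural first attempt is the $Y$-directions at $z=0$. The hypothesis $F\in(z,Y)^3$ gives $H(0)=0$, $L\in(z)^2$ and $c\in(z)^3$. Hence every $Y$-derivative vanishes on $\{z=0\}$, namely $H(0)Y+L(0)=0$. The $z$-derivatives there equal $\tfrac12Y^{\mathsf T}H_{z_i}(0)Y$, which need not vanish. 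So the $Y$-axis alone is not critical in general, and a subtler argument is needed.

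Instead, I would use that the critical locus is cut out by $n=p+q$ equations in $n$ variables, and that an isolated critical point means $\dim\Oan/(F_z,F_Y)<\infty$. Consider the ideal $(F_Y)=(HY+L)$. Over the open set where $H$ has generic rank $\rho<q$, only $\rho$ of these equations are independent in $Y$. Cramer's rule gives $\operatorname{adj}(H)(HY+L)=\det H\cdot Y+\operatorname{adj}(H)L=\operatorname{adj}(H)L$. So the ideal $(F_Y)$ contains the $Y$-free elements $\operatorname{adj}(H)L$, and more generally $v^{\mathsf T}L$ for any $v$ with $v^{\mathsf T}H=0$.

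A dimension count then gives the contradiction. The locus $\{F_Y=0\}$ has dimension at least $p+q-\rho\ge p+1$ near points where it is nonempty. Its projection to $z$-space has fibres that are affine spaces of dimension $q-\rho\ge1$. The remaining $p$ equations $F_z=0$ cut this locus down to dimension at least $1$ at the origin, since the origin lies on it. This contradicts isolatedness. The \emph{main obstacle} is making this intersection-dimension argument rigorous at the origin, because the fibre dimension may jump at $z=0$. I would handle it via Krull's height theorem applied to the irreducible component through $0$ of $\{F_Y=0\}$ that dominates the generic fibre structure. If that proves awkward, I would try the explicit curve $Y=tv(0)+\dots$ for a suitable kernel vector instead.

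\textbf{Step 2: $k=0$.} Write $k^{\mathsf T}F_Y=k(z)^{\mathsf T}H(z)Y+k(z)^{\mathsf T}L(z)$. This expression is independent of $Y$ exactly when every coefficient of $Y$ vanishes, that is, when $k^{\mathsf T}H=0$ as a row of power series. By Step 1, $\det H$ is a nonzero element of the integral domain of convergent power series (or polynomials). Multiplying $k^{\mathsf T}H=0$ on the right by $\operatorname{adj}H$ gives $\det H\cdot k^{\mathsf T}=0$. Since the ring has no zero divisors, $k=0$.
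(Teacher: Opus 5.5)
Your Step 2 is correct and matches the paper's last paragraph: multiplying by $\operatorname{adj}H$ is the same as inverting $H$ over the fraction field. Step 1, however, has a genuine gap.

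\textbf{The dimension count fails.} The claim that $\{F_Y=0\}$ has dimension at least $p+q-\rho$ is false in general. The $Y$-free elements $\operatorname{adj}(H)L\in(F_Y)$ that you found typically make the fibre over a generic $z$ empty. Then $\{F_Y=0\}$ has only the Krull lower bound $p$. Under the isolatedness you are trying to contradict, it has pure dimension exactly $p$, since the gradient is a regular sequence. So ``dimension $\ge p+1$'' is the contradiction itself, and the generic-fibre argument does not prove it. A concrete example is $p=q=1$, $F=z^2Y$. Here $H\equiv0$ and $\rho=0$, but $\{F_Y=0\}=\{z=0\}$ has dimension $1=p$, not $2$. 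In such cases the non-isolatedness lives entirely in the special fibre over $z=0$. That fibre is all of $Y$-space because $H(0)=0$ and $L(0)=0$. No count over generic $z$ sees it, and no component through $0$ need dominate $z$-space, so the Krull repair does not apply.

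\textbf{The fallback also fails in general.} If $H(z)v(z)=0$ with $v(0)\neq0$, then $H_jv(0)=0$ for all $j$, and $(0,tv(0))$ is a critical line. But every kernel vector can vanish at $0$. Take $F=z_1Y_1Y_2+z_2Y_1Y_3$, so $H=\begin{pmatrix}0&z_1&z_2\\ z_1&0&0\\ z_2&0&0\end{pmatrix}$ and $\det H\equiv0$. Its kernel over $\C\{z\}$ is generated by $(0,z_2,-z_1)^{\mathsf T}$, and $H_1,H_2$ have no common kernel vector.

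\textbf{The missing idea.} Use the quadrics you already computed on $z=0$, namely $F_{z_j}(0,Y)=\tfrac12Y^{\mathsf T}H_jY$.
\begin{enumerate}
\item Because $H(0)=0$, the ordinary degree-$q$ part of $\det H$ is $\det\bigl(\sum_jz_jH_j\bigr)$. So $\det H\equiv0$ forces every matrix in the span $\mathcal H$ of $H_1,\ldots,H_p$ to be singular.
\item The paper then proves that a linear space of singular symmetric matrices has a common nonzero isotropic vector. Choose $M\in\mathcal H$ of maximal rank $e<q$, a vector $0\neq v\in\ker M$, and an $e$-dimensional subspace $U$ on which $M$ is nondegenerate.
\item For $N\in\mathcal H$, the restriction of $M+tN$ to $U\oplus\C v$ is singular for all $t$. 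The coefficient of $t$ in its determinant is $\det(M_U)\,v^{\mathsf T}Nv$. Hence $v^{\mathsf T}H_jv=0$ for every $j$.
\item Then $F_Y(0,tv)=0$ and $F_{z_j}(0,tv)=\tfrac{t^2}{2}v^{\mathsf T}H_jv=0$. So $(0,tv)$ is a line of critical points, contradicting isolatedness.
\end{enumerate}
In the example above, $v=e_1$ works, although it is a kernel vector of neither $H_1$ nor $H_2$.
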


\begin{proof}
The assumption $F\in(z,Y)^3$ gives $H(0)=0$, $L\in(z)^2$, and $c\in(z)^3$. Put $H_j=(\partial H/\partial z_j)(0)$. Along $z=0$, the derivatives are
\begin{align*}
F_Y(0,Y)=0,\qquad F_{z_j}(0,Y)=\tfrac12Y^{\mathsf T}H_jY.
\end{align*}
\blueedit{If $v^{\mathsf T}H_jv=0$ for every $j$, then $v=0$. Indeed, a nonzero such $v$ would make every point $(z,Y)=(0,tv)$, for sufficiently small $t$, a critical point of $F$, contrary to isolatedness. In particular, the hypotheses exclude $p=0$.}

Let $\mathcal H$ be the vector space spanned by the symmetric matrices $H_1,\ldots,H_p$ over $\C$. We show that it contains an invertible matrix. Choose $M\in\mathcal H$ of maximal rank $e$. If $e<q$, choose $0\ne v\in\ker M$. The symmetric bilinear form with matrix $M$ has an $e$-dimensional subspace $U$ on which it is nondegenerate. This follows by choosing a basis for its nonsingular part; equivalently, a symmetric bilinear form in characteristic different from two can be diagonalized by a change of basis. Since $v$ is in the kernel, $U\cap\C v=0$.

Take any $N\in\mathcal H$. The rank of $M+tN$ is at most $e$ for every $t\in\C$, by maximality of $e$. Its restriction to $U\oplus\C v$ therefore has determinant zero for every $t$. In a basis consisting of a basis of $U$ followed by $v$, that restriction has matrix
\begin{align*}
\begin{pmatrix}
M_U+tN_U & t b\\
tb^{\mathsf T} & t v^{\mathsf T}Nv
\end{pmatrix}.
\end{align*}
The coefficient of $t$ in its determinant is $\det(M_U)v^{\mathsf T}Nv$. Terms that use an off-diagonal entry in the last row must also use one in the last column, so they have order at least two in $t$. Since $M_U$ is invertible, we obtain $v^{\mathsf T}Nv=0$. If $e=0$, the same argument uses the one-dimensional space $\C v$ and gives this equality directly.

This holds for every $N\in\mathcal H$, hence for every $H_j$. It contradicts the absence of a common nonzero zero of the quadratic forms. Thus $e=q$, and $\mathcal H$ contains an invertible matrix.

The linear part of $H(z)$ is $\sum_jz_jH_j$. Its determinant is a nonzero homogeneous polynomial of ordinary degree $q$. Because every entry of $H(0)$ is zero, this polynomial is exactly the ordinary degree-$q$ part of $\det H(z)$. Hence $\det H$ is not identically zero.

Finally, $F_Y=H(z)Y+L(z)$. The coefficient of $Y$ in $k(z)^{\mathsf T}F_Y$ is $k(z)^{\mathsf T}H(z)$. Independence of $Y$ implies $H(z)k(z)=0$. Over the fraction field of $\C\{z\}$ the matrix $H$ is invertible, so $k=0$. For polynomials, the same argument uses $\C(z)$, the fraction field of $\C[z]$.
\end{proof}

\begin{lemma}\label{lem:square}
Assume $f\in S_d\cap\m^3$, $d>2W$, and $f$ has an isolated critical point. If $0\ne a\in S_r$ and $0<3r<d$, then $a^2\notin J$.
\end{lemma}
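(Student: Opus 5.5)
The plan is a degree count combined with Lemma~\ref{lem:quadratic}, applied after splitting the variables at weight $d/3$. Suppose, for contradiction, that $a^2\in J$. Taking homogeneous components, write
\begin{align*}
a^2=\sum_j h_jf_j,\qquad h_j\in S_{2r-d+w_j},
\end{align*}
with $h_j=0$ whenever $2r-d+w_j<0$.

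First I will locate the nonzero $h_j$. If $h_j\ne0$, then $w_j\ge d-2r$. Since $3r<d$, we have $d-2r>d/3$. So only indices $j$ with $w_j>d/3$ contribute.

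Next I split the variables. Let $Y$ be the variables of weight $>d/3$ and $z$ the variables of weight $\le d/3$. If $Y$ is empty, the sum is empty, so $a^2=0$ and hence $a=0$, a contradiction. Otherwise:
\begin{itemize}
\item A monomial of weight $d$ cannot contain three $Y$-factors, so $f$ has ordinary degree at most two in $Y$.
\item By hypothesis $f\in\m^3=(z,Y)^3$ and $f$ has an isolated critical point.
\end{itemize}
Thus $f$ satisfies the hypotheses of Lemma~\ref{lem:quadratic}, in its polynomial version, with $F=f$.

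Then I will check that $a$ and every contributing $h_j$ are polynomials in $z$ alone.
\begin{itemize}
\item The polynomial $a$ has weight $r<d/3$, so it cannot involve any variable of weight $>d/3$.
\item For contributing $j$, using $r<d/3$ and $W<d/2$,
\begin{align*}
\deg h_j=2r-d+w_j<\tfrac{2d}{3}-d+W<\tfrac d2-\tfrac d3=\tfrac d6 .
\end{align*}
Hence $h_j$ involves only variables of weight $<d/6$, which are all $z$-variables.
\end{itemize}

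Let $k(z)$ be the column of the $h_j$ indexed by the $Y$-variables. Then
\begin{align*}
k(z)^{\mathsf T}F_Y(z,Y)=a(z)^2,
\end{align*}
which is independent of $Y$. Lemma~\ref{lem:quadratic} then gives $k=0$. Since every contributing $h_j$ is an entry of $k$, we get $a^2=0$, contradicting $a\ne0$.

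The only delicate step is the degree bound showing that the $h_j$ do not involve $Y$. It uses both $3r<d$ and $d>2W$. This is where the threshold $d/3$ is chosen: it is large enough that $f$ is at most quadratic in $Y$, and small enough that $a$ and the coefficients $h_j$ avoid $Y$.
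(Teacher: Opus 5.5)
Your proof is correct and is essentially the paper's argument. The paper takes $Y$ to be the variables of weight at least $d-2r$ and proves $\deg k_i<r$, whereas you split at weight $d/3$ and prove $\deg h_j<d/6$; both choices make $f$ at most quadratic in $Y$ and keep $a$ and the coefficients free of $Y$, so Lemma~\ref{lem:quadratic} finishes the proof in the same way.
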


\begin{proof}
Suppose instead that $a^2\in J$. Write
\begin{align*}
a^2=\sum_i k_if_i,\qquad k_i\in S_{2r-d+w_i}.
\end{align*}
Only indices with $w_i\geq d-2r$ can have nonzero coefficients. Denote all variables with these weights by $Y$, and all other variables by $z$. If there are no $Y$ variables, all $k_i$ vanish, which already contradicts $a\ne0$.

Since $3r<d$, every $Y$ variable has weight at least $d-2r>r$ and also greater than $d/3$. Thus $a$ is independent of $Y$. Further,
\begin{align*}
\deg k_i\leq W+2r-d<r.
\end{align*}
The strict inequality follows from $W<d/2$ and $r<d/3$, which imply $W+r<d$. Therefore every nonzero $k_i$ is also independent of $Y$.

Three $Y$ variables, counted with repetition, have total weight greater than $d$. Hence $f$ has ordinary degree at most two in $Y$. The displayed expression for $a^2$ now has the form
\begin{align*}
k(z)^{\mathsf T}f_Y(z,Y)=a(z)^2.
\end{align*}
Its right side is independent of $Y$. Lemma~\ref{lem:quadratic} forces $k=0$. This gives $a^2=0$ in the polynomial ring, contrary to $a\ne0$.
\end{proof}
%\subsection{A square deformation with constant Milnor number}
For an analytic germ $F$ with isolated critical point, its Milnor number is
\begin{align*}
\mu(F)=\dim_{\C}\Oan/(F_{x_1},\ldots,F_{x_n}).
\end{align*}
A one-parameter analytic deformation of $f$ is a convergent power series $F(x,t)$ with $F(x,0)=f(x)$. We write $f_t(x)=F(x,t)$. It has constant Milnor number at the origin if $0$ is critical for all small $t$, remains isolated, and $\mu(f_t)=\mu(f)$.

\begin{lemma}\label{lem:mu}
Let $f(u,y)$ be an analytic germ with an isolated critical point at $0$ and $f(0)=0$. Let $u=(u_1,\ldots,u_p)$ with $p\geq1$, and $y=(y_1,\ldots,y_q)$. Suppose that $a=a(u)$ belongs to the radical of $(f_{y_1},\ldots,f_{y_q})$. Then $f_t=f+ta^2$ has an isolated critical point at $0$ and $\mu(f_t)=\mu(f)$ \rev{for each $t\in\C$, as a germ at $0$}. For the analytic family near $t=0$, the critical set near $(0,0)$ is, set-theoretically, the zero section $x=0$, and its critical value is zero.
\end{lemma}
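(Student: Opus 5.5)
The plan is to compare the ideals $J_t=(\partial_1 f_t,\ldots,\partial_n f_t)$ with $J_0$ through the $y$-derivatives, which are unchanged by the deformation. Since $a=a(u)$ does not involve $y$, we have $\partial_{y_j}f_t=f_{y_j}$ for every $j$, while $\partial_{u_i}f_t=f_{u_i}+2ta\,\partial_{u_i}a$. Put $I=(f_{y_1},\ldots,f_{y_q})$. By hypothesis there is $N\geq1$ with $a^N\in I$ in $\Oan$.

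First we show that the critical locus does not grow. Take a point $x$ near $0$ at which $df_t(x)=0$. Then $f_{y_j}(x)=0$ for all $j$, so $x\in V(I)$, and therefore $a(x)^N=0$, that is, $a(x)=0$. The $u$-derivatives at $x$ then reduce to $f_{u_i}(x)$, so $x$ is a critical point of $f$. By isolatedness of $f$, this forces $x=0$, provided $x$ lies in a fixed neighbourhood $U$ on which $f$ has no other critical point and where $a^N\in I$ holds with analytic coefficients. The choice of $U$ does not depend on $t$. This gives isolatedness of $f_t$ at $0$ for every $t\in\C$. It also gives the set-theoretic statement that the critical set of the family near $(0,0)$ is $\{x=0\}$. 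Since $a(0)=0$ (because $a$ lies in the radical of an ideal contained in $\m$), we get $f_t(0)=f(0)=0$, so the critical value is zero.

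For constancy of the Milnor number we use conservation of number. The family $F(x,t)=f+ta^2$ has relative critical ideal $(\partial_xF)$ in $\Oan_{n+1}$. Its zero set near $(0,0)$ is exactly $\{x=0\}$, which is finite over the $t$-line. Hence $\Oan_{n+1}/(\partial_xF)$ is a finite $\C\{t\}$-module. Because the $n$ partial derivatives cut out a subvariety of codimension $n$ in $\C^{n+1}$, they form a regular sequence, so this quotient is Cohen--Macaulay of dimension one and hence free over $\C\{t\}$. Its rank equals the fibre dimension at $t=0$, which is $\mu(f)$. Its rank also equals the sum of the local Milnor numbers of $f_t$ at the critical points near $0$, and $0$ is the only such point. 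Thus $\mu(f_t)=\mu(f)$ for small $t$.

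To pass to arbitrary $t\in\C$, we could instead run the same argument along the family $s\mapsto f+(t_0+s)a^2$ at each $t_0$. The preceding paragraph shows that $\mu$ is locally constant on $\C$, and connectedness of $\C$ then gives constancy everywhere. The main obstacle is making the neighbourhood and the freeness step uniform. This is handled by the observation above: the neighbourhood $U$ comes from $f$ and $I$ alone and is therefore independent of $t$. The freeness claim is the standard flatness property of finite Cohen--Macaulay modules over $\C\{t\}$, which we would cite from Appendix~\ref{app:local}.
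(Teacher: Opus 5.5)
Your proof is correct, but it takes a different route from the paper's.

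\textbf{Your route.} You evaluate a relation $a^N=\sum_j c_jf_{y_j}$ pointwise on a fixed neighbourhood $U$. This gives isolatedness of every $f_t$ and the critical locus of the family in one step. You then get $\mu(f_t)=\mu(f)$ from conservation of number: $\C\{x,t\}/(\partial_xF)$ is finite and Cohen--Macaulay of dimension one, hence free over $\C\{t\}$. This gives local constancy in $t$, and connectedness of $\C$ finishes.

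\textbf{The paper's route.} The paper never forms a family over a $t$-disc to compare Milnor numbers. It works in $R=\Oan/(f_{y_1},\ldots,f_{y_q})$, which is Cohen--Macaulay of dimension $p$. Since $\overline a$ is nilpotent there, the ideal $Q_t$ of perturbed $u$-derivatives has the same radical as $Q_0$. So $Q_t$ is a parameter ideal, which already gives isolatedness. Moreover $Q_t$ and $Q_0$ have identical images in $R/\mathfrak p$ for every minimal prime $\mathfrak p$. For parameter ideals in a Cohen--Macaulay ring, length equals Hilbert--Samuel multiplicity, and the associativity formula then yields $\length(R/Q_t)=\length(R/Q_0)$ directly for each fixed $t\in\C$. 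The critical set of the family is treated by the same radical comparison in $\C\{u,y,t\}/(f_y)$, which is the algebraic counterpart of your pointwise argument.

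\textbf{Comparison.} The paper's route needs no uniform neighbourhood, no coherent direct image, and no connectedness step, at the cost of multiplicity theory. Yours uses only the standard conservation principle. However, it relies on the analytic input that the fibre of the free module at $t_0\neq0$ is the sum of the local Milnor algebras of $f_{t_0}$, which comes from coherence of the finite pushforward. Also note that Appendix~\ref{app:local} does not contain the freeness statement you propose to cite. You would have to supply it yourself: depth one over the discrete valuation ring $\C\{t\}$ means torsion-free, hence free.
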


\begin{proof}
Write $\Oan=\C\{u,y\}$ and $R=\Oan/(f_{y_1},\ldots,f_{y_q})$. The complete gradient is a regular sequence in $\Oan$. Hence $R$ is Cohen--Macaulay of dimension $p$. The assumption on $a$ means that its image in $R$ is nilpotent. In particular $a(0)=0$, since the ideal of the $y$ derivatives vanishes at the origin.

Use bars for classes in $R$ and set
\begin{align*}
Q_t=(\overline{f_{u_i}}+2t\overline a\,\overline{a_{u_i}}:1\leq i\leq p)\subset R.
\end{align*}
The image of $\overline a$ is zero in $R_{\red}$. Thus $Q_t$ and $Q_0$ have the same image in $R_{\red}$ and the same radical in $R$. The quotient $R/Q_0$ is the Milnor algebra of $f$, so it has finite length. The same is true of $R/Q_t$. Each $Q_t$ is therefore a parameter ideal in the $p$-dimensional ring $R$.

Let $\p$ be a minimal prime with $\dim R/\p=p$. A nilpotent element belongs to every prime ideal, so $\overline a\in\p$. It follows that the images of $Q_t$ and $Q_0$ in $R/\p$ are equal. \rev{The equality on each minimal-prime quotient allows us to apply the associativity formula. Since the ideals are parameter ideals in a Cohen--Macaulay ring, their quotient lengths equal their multiplicities.} Hence,
\begin{align*}
\length_R(R/Q_t)
&=e(Q_t,R)\\
&=\sum_{\substack{\p\in\Min R\\\dim R/\p=p}}
\length_{R_{\p}}(R_{\p})\,e(Q_t(R/\p),R/\p)\\
&=\sum_{\substack{\p\in\Min R\\\dim R/\p=p}}
\length_{R_{\p}}(R_{\p})\,e(Q_0(R/\p),R/\p)\\
&=e(Q_0,R)=\length_R(R/Q_0).
\end{align*}
The $y$ derivatives of $f_t$ are those of $f$, since $a$ depends only on $u$. The $u$ derivatives are $f_{u_i}+2t aa_{u_i}$. Therefore $R/Q_t$ is the Milnor algebra of $f_t$. The equality proves constancy of the Milnor number. Since $a(0)=0$, all first derivatives of $f_t$ vanish at $0$. The finite length just proved gives isolatedness.

We also check the critical set of the family. Work in $\C\{u,y,t\}/(f_y)$. The relation making $a$ nilpotent still holds in this ring. After reduction, the ideals generated by $f_u+2t aa_u$ and by $f_u$ agree. The full gradient ideal of $f_t$ and the extension of the full gradient ideal of $f$ have the same radical in $\C\{u,y,t\}$. The latter radical is $(u,y)$. Hence, set-theoretically, the critical set of the family near $(0,0)$ is exactly $u=y=0$. Finally $f_t(0)=f(0)+ta(0)^2=0$, so the critical value stays zero.
\end{proof}

\begin{revision}
The algebraic argument uses only the first-order consequence stated below
for a linear one-parameter family. We derive it from the description of
the reduced $\mu$-constant locus in \cite{AGV88}, Theorem~14.8. The stratum theorem is due to Varchenko \cite[Theorem~2, p.~30]{Var82}. Saito \cite[Section~3.4, pp.~160--161]{MS91} gives another statement of the same stratum description. The assumptions $f\in\m^3$ and $d>2W$ are not required for the next lemma.

\begin{lemma}\label{thm:varchenko}
Let $f\in S_d$ have an isolated critical point at the origin and satisfy $f(0)=0$.
Let $q\in S_e$, where $e<d$. Suppose that, for every sufficiently small
$t\in\C$, the germ $f_t=f+tq$ has an isolated critical point at the origin,
$f_t(0)=0$, and $\mu(f_t,0)=\mu(f,0)$. Then $q\in J$.
\end{lemma}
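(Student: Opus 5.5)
Our route is to derive the lemma from the description of the $\mu$-constant stratum in the miniversal deformation of a weighted homogeneous singularity \cite[Theorem~14.8, p.~415]{AGV88}. Choose a monomial basis $e_1,\ldots,e_\mu$ of $A=S/J$ consisting of weighted homogeneous elements. The miniversal unfolding is
\[
F(x,\lambda)=f(x)+\sum_{j}\lambda_je_j(x).
\]
The cited theorem says that, near $\lambda=0$, the reduced $\mu$-constant stratum coincides with the linear subspace $\{\lambda_j=0 \text{ whenever } \deg e_j<d\}$. Equivalently, the $\mu$-constant deformations only move the coefficients of basis monomials of degree $\geq d$.

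First, we reduce $q$ modulo $J$. By homogeneity of $J$, we can write $q=\sum_{\deg e_j=e}c_je_j+\sum_i h_if_i$ with $h_i\in S_{e-d+w_i}$. The goal is $c=0$. The derivation $\xi=\sum_i h_i\partial_i$ is homogeneous and vector fields of this form generate coordinate changes. So we realize the family $f+tq$, up to a $t$-dependent analytic coordinate change $\phi_t$ with $\phi_0=\mathrm{id}$, as being induced from the miniversal family by an analytic map germ $\lambda(t)$ with $\lambda(0)=0$. To first order, $\lambda'(0)$ is the class of $q$ in $A$, namely the vector $(c_j)$ supported on indices with $\deg e_j=e<d$. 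This is the standard first-order statement of versality: the derivative of the induced map equals the class of $\partial_tF$ modulo the tangent space to the orbit, which is $J$. The class of $f$ itself lies in $\m J$ by Euler's identity, so only $J$ matters.

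Next, we use constancy of $\mu$. The induced map satisfies $\mu(F(\cdot,\lambda(t)))=\mu(f)$ for small $t$, and $\mu$ is invariant under analytic coordinate change. Hence the curve $\lambda(t)$ lies in the $\mu$-constant stratum, set-theoretically. By Theorem~14.8 this stratum is the linear subspace where the coordinates of degree $<d$ vanish. Therefore $\lambda_j(t)=0$ for all small $t$ whenever $\deg e_j<d$. Differentiating gives $c_j=\lambda_j'(0)=0$ for all $j$ with $\deg e_j=e$. Thus $q\in J$.

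The hypotheses that the critical point stays at the origin and that $f_t(0)=0$ are used so that the germ at $0$ is the relevant one and no constant term has to be absorbed. A constant could otherwise be absorbed into $e_1=1$, but since $e>0$ or $q$ is constant with $f_t(0)=0$, this gives no trouble.

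The main obstacle is the interface with the external theorem, and there are two points to handle. First, Theorem~14.8 describes the stratum inside the base of a miniversal deformation, so we need the induced-map formulation of versality rather than a genuine inclusion of our family. Second, we need the first-order identification $\lambda'(0)=[q]$. We would justify this by differentiating the identity $F(\phi_t(x),\lambda(t))=f(x)+tq(x)$, possibly up to a unit or constant in the $\mathcal K$- versus $\mathcal R$-equivalence setting, at $t=0$. This gives $q=\sum_j\lambda_j'(0)e_j+\sum_i(\partial_t\phi_t|_{0})_i f_i$. We would state clearly which equivalence (right equivalence, with the unfolding by a basis of $\Oan/J$) Theorem~14.8 refers to, and check that the grading conventions match, namely that weights below $d$ correspond to the "lower" monomials.
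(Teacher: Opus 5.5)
Your proposal is correct and is essentially the paper's argument. You obtain the induced curve from versality of $f+\sum_j s_j\phi_j$ with $\phi_0=1$, use the stratum theorem (Theorem~14.8 of Arnold--Gusein-Zade--Varchenko) to get $\lambda_j(t)\equiv 0$ whenever $\deg e_j<d$, and differentiate at $t=0$; reading $\lambda'(0)$ as the coordinate vector of $[q]$ is equivalent to the paper's taking the degree-$e$ component.

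The paper is more explicit at the interface you flagged, in two places. First, the relevant critical point of the induced function is $p(t)=\Phi_t(0)$, and $F_{\gamma(t)}(p(t))=f_t(0)=0$ is what places $\gamma(t)$ in $\Sigma_\mu$; the critical-value condition is needed because of the constant term $\phi_0=1$. Second, it settles $n=1$ by the direct computation $\mu(f_t,0)=\ell-1<m-1$ rather than invoking the stratum theorem there.
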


\begin{proof}
The assertion is immediate if $q=0$. First suppose that $n=1$.
Write $f=cx^m$ with $c\ne0$ and $m\geq2$. If $q\ne0$, homogeneity
gives $q=bx^\ell$ with $b\ne0$ and $0\leq\ell<m$.
The condition $f_t(0)=0$ excludes $\ell=0$, and criticality at the
origin excludes $\ell=1$. For $2\leq\ell<m$ and $t\ne0$,
\[
 f_t'(x)=x^{\ell-1}\bigl(mc\,x^{m-\ell}+t\ell b\bigr).
\]
The expression in parentheses is a unit at the origin, so
$\mu(f_t,0)=\ell-1<m-1=\mu(f,0)$, a contradiction.
Thus $q=0$ in the one-variable case.

Now assume $n>1$. We use the description of the $\mu$-constant stratum
in \blueedit{\cite[Theorem~14.8, p.~415]{AGV88}}. The construction and
versality of the deformation below are given in
\cite[Section~3.1, pp.~67--68]{AGV88}.
Choose homogeneous monomial representatives
$\phi_0=1,\phi_1,\ldots,\phi_{\mu-1}$ of a basis of $M_f$,
where $\mu=\mu(f,0)$, and put $e_j=\deg\phi_j$.
The miniversal deformation for right equivalence of functions is
\[
 F_s(x)=f(x)+\sum_{j=0}^{\mu-1}s_j\phi_j(x).
\]
Normalize the variable weights to $w_i/d$, so that $f$ has degree one.
The corresponding parameter weights are
\[
 \nu_j=1-\frac{e_j}{d},\qquad
 \nu_j>0\ \Longleftrightarrow\ e_j<d.
\]
Let $\Sigma_\mu$ be the reduced analytic germ of parameters $s$ for
which $F_s$ has a critical point $p$ near the origin satisfying
$F_s(p)=0$ and $\mu(F_s,p)=\mu$.
Theorem~14.8 in \cite{AGV88} gives
\[
 \Sigma_\mu=\{s:s_j=0\text{ whenever }\nu_j>0\}.
\]
Analytic versality gives an analytic map $\gamma(t)$ with $\gamma(0)=0$
and a local biholomorphism
$(x,t)\mapsto(\Phi(x,t),t)$ with $\Phi(x,0)=x$, such that
\[
 f+tq=F_{\gamma(t)}\circ\Phi_t,\qquad \Phi_t(x)=\Phi(x,t).
\]
This is right equivalence of functions. Put $p(t)=\Phi_t(0)$.
The chain rule and the invertibility of the differential of $\Phi_t$
give $dF_{\gamma(t)}(p(t))=0$. Moreover,
\[
 F_{\gamma(t)}(p(t))=f_t(0)=0,\qquad
 \mu(F_{\gamma(t)},p(t))=\mu(f_t,0)=\mu.
\]
Consequently $\gamma(t)\in\Sigma_\mu$, so
$\gamma_j(t)=0$ whenever $e_j<d$.
Differentiating the analytic identity at $t=0$ and writing
$v_i(x)=\partial_t\Phi_i(x,0)$ gives
\[
 q=\sum_i v_i f_i+\sum_{e_j\geq d}\gamma_j'(0)\phi_j.
\]
Take its weighted homogeneous component of degree $e$. The second sum
has no such component because $e<d$. Hence
\[
 q=\sum_i (v_i)_{e-d+w_i}f_i\in J.
\]
Each coefficient $(v_i)_{e-d+w_i}$ is a polynomial because all variable
weights are positive; a component of negative degree is zero.
\end{proof}

\Needspace{9\baselineskip}
\begin{corollary}\label{cor:tangent}
Assume $f\in S_d\cap\m^3$, $d>2W$, and that the origin is an isolated
critical point. Let $a\in S_r$, where $r>0$ and $3r<d$. If
\[
 a\in\sqrt{(f_i:w_i>r)},
\]
then $a^2\in J$.
\end{corollary}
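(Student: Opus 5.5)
The plan is to combine Lemma~\ref{lem:mu} with Lemma~\ref{thm:varchenko}, applied to $q=a^2$. Split the variables into $y=(x_i:w_i>r)$ and $u=(x_i:w_i\leq r)$. Since $a\in S_r$ is weighted homogeneous of weight $r$ and every variable of weight greater than $r$ would push a monomial past weight $r$, the polynomial $a$ involves only the $u$ variables, so $a=a(u)$. If there are no $u$ variables, then $a$ is a constant of positive weight, hence $a=0$ and $a^2=0\in J$ trivially. Otherwise $p\geq1$, and the hypothesis $a\in\sqrt{(f_i:w_i>r)}$ says exactly that $a$ lies in the radical of $(f_{y_1},\ldots,f_{y_q})$ in $S$, hence also in $\C\{u,y\}$. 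If $q=0$, the radical is that of the zero ideal in a domain, so $a=0$ again. The case $q\geq1$ is the substantive one.

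With $q\geq1$, Lemma~\ref{lem:mu} applies to the germ of $f$ at $0$, using $f(0)=0$ because $f\in\m^3$. It shows that $f_t=f+ta^2$ has an isolated critical point at the origin with $f_t(0)=0$ and $\mu(f_t,0)=\mu(f,0)$ for every $t$. Now $a^2\in S_{2r}$ and $2r<d$ because $3r<d$. So Lemma~\ref{thm:varchenko}, with $q=a^2$ and $e=2r<d$, yields $a^2\in J$. Here $J$ is the polynomial gradient ideal, which is what Lemma~\ref{thm:varchenko} produces through its homogeneous-component argument.

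The only point needing care is the passage between the polynomial radical and the analytic radical required by Lemma~\ref{lem:mu}. I expect this to be routine: a relation $a^N=\sum_i c_if_{y_i}$ in $S$ remains a relation in $\C\{u,y\}$, so nothing beyond inclusion is needed. The main conceptual weight lies in Lemma~\ref{thm:varchenko}, which is already available, so the corollary should be a short assembly. The condition $f\in\m^3$, $d>2W$ is used only to guarantee $f(0)=0$ and criticality at $0$. Note that the stronger conclusion from Lemma~\ref{lem:square}, namely that $a^2\in J$ is impossible for $a\ne0$, is what will later be combined with this corollary.
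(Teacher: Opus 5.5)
Your proposal is correct and matches the paper's proof step for step. You write $a=a(u)$ in the variables of weight at most $r$ and dispose of the degenerate cases with no $u$ or no $y$ variables. You then pass the radical relation from $S$ to $\C\{u,y\}$, apply Lemma~\ref{lem:mu} to $f+ta^2$, and conclude with Lemma~\ref{thm:varchenko}, using $e=2r<d$.
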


\begin{proof}
If $a=0$, the assertion is immediate. Assume $a\ne0$.
Let $u$ denote the variables of weight at most $r$, and $y$ the remaining
variables. Positivity of the weights implies that $a=a(u)$.
The list $u$ is nonempty, since a nonzero polynomial of positive degree
$r$ cannot be formed using only variables of weight greater than $r$.
If the list $y$ is empty, the ideal in the hypothesis is zero, whose
radical is zero in $S$, contrary to $a\ne0$.

The radical relation remains valid in the analytic local ring, because
the same identity $a^N\in(f_y)$ holds there for some $N\geq1$.
Lemma~\ref{lem:mu} therefore applies to $f+t a^2$.
It gives an isolated critical point at the origin, constant local
Milnor number, and critical value zero for all sufficiently small $t$.
Since $a^2\in S_{2r}$ and $2r<d$, Lemma~\ref{thm:varchenko} gives
$a^2\in J$. The stronger bound $3r<d$ is retained because it is also
needed in Lemma~\ref{lem:square}.
\end{proof}
\end{revision}

%\subsection{The radical statement in every degree}

For a positive integer $r$, define
\begin{align*}
I_r=(f_i:w_i>r)\subset S.
\end{align*}
The ideal generated by an empty list is the zero ideal.

\begin{theorem}\label{thm:radical}
Let $f\in S_d\cap\m^3$ have an isolated critical point, and assume $d>2W$. Then
\begin{align*}
\sqrt{I_r}\cap S_r=\{0\}\qquad\text{for every integer }r>0.
\end{align*}
\end{theorem}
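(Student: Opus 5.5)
We split according to whether $3r<d$ or $3r\geq d$, as the introduction indicates. Fix $r>0$ and a homogeneous $a\in\sqrt{I_r}\cap S_r$. We must show $a=0$.

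\emph{Case $3r<d$.} Corollary~\ref{cor:tangent} applies directly to $a$ and gives $a^2\in J$. If $a\neq0$, this contradicts Lemma~\ref{lem:square}, since $0<3r<d$. Hence $a=0$. This case is essentially immediate from the results already established; all the analytic input (Lemma~\ref{lem:mu} and Lemma~\ref{thm:varchenko}) has been packaged into the corollary.

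\emph{Case $3r\geq d$.} Let $Y$ denote the variables of weight $>r$ and $z$ the others. Then $a=a(z)$, by positivity of the weights. If there are no $Y$ variables, $I_r=0$ and its radical is $0$, since $S$ is reduced. Otherwise, three $Y$ variables have total weight $>3r\geq d$, so $f$ has ordinary degree at most two in $Y$. Write $f=\tfrac12Y^{\mathsf T}H(z)Y+L(z)^{\mathsf T}Y+c(z)$. Then the generators of $I_r$ are the entries of $f_Y=H(z)Y+L(z)$.

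Since $f\in\m^3$ has an isolated critical point, Lemma~\ref{lem:quadratic} gives $\det H\neq0$ in $\C[z]$. Let $K=\C(z)$. Over $K[Y]$, the ideal generated by the entries of $H Y+L$ equals the ideal of the point $Y=-H^{-1}L$. This is a maximal ideal of $K[Y]$ with residue field $K$, and the quotient map is the substitution $Y\mapsto -H^{-1}L$. Its kernel meets $\C[z]$ trivially, because the substitution fixes elements of $\C[z]$. Thus $I_rK[Y]\cap \C[z]=0$, and in particular $I_r\cap\C[z]=0$.

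Now suppose $a^N\in I_r$. Since $a^N\in\C[z]$, we get $a^N\in I_r\cap\C[z]=0$. As $S$ is a domain, this forces $a=0$.

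\emph{Main obstacle.} The delicate point is the passage to $K[Y]$. We need the prime ideal $(HY+L)K[Y]$ to contract to $0$ in $\C[z]$, and this is immediate from the substitution homomorphism $\C[z][Y]\to K$, $Y\mapsto-H^{-1}L$, which kills $I_r$ and is injective on $\C[z]$. The hypotheses of Lemma~\ref{lem:quadratic} must also be checked: $F=f$ is a polynomial in $\m^3$ with an isolated critical point, of degree at most two in $Y$, and $q\geq1$. All of these hold here.
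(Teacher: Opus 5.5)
Your proposal is correct and follows the paper's proof. You use the same split at $3r<d$ versus $3r\geq d$, with Corollary~\ref{cor:tangent} and Lemma~\ref{lem:square} in the first range, and Lemma~\ref{lem:quadratic} followed by passage to $K=\C(z)$ in the second. Your substitution homomorphism $Y\mapsto -H^{-1}L$, which kills $I_r$ and is injective on $\C[z]$, is a slightly more explicit form of the paper's argument that $(f_Y)K[Y]$ is proper while $a$ becomes a unit in $K$.
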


\begin{proof}
Suppose that $0\ne a\in\sqrt{I_r}\cap S_r$. Let $u$ be the list of variables of weight at most $r$, and let $y$ be the list of variables of weight greater than $r$. Since $a$ has degree $r$, it only depends on $u$. If there are no $u$ variables, then $S_r=0$, a contradiction. If there are no $y$ variables, then $I_r=0$, whose radical is zero in $S$, again 
a contradiction. We can therefore assume both lists are nonempty. We have $I_r=(f_y)$.

First suppose that $3r<d$. Corollary~\ref{cor:tangent} gives
$a^2\in J$. This contradicts Lemma~\ref{lem:square}.

It remains to treat $3r\geq d$. \rev{Every $y$ variable has weight strictly greater than $r\geq d/3$.} A product of three $y$ variables has weight greater than $d$. Thus $f$ has ordinary degree at most two in $y$. Write
\begin{align*}
f(u,y)=\tfrac12y^{\mathsf T}H(u)y+L(u)^{\mathsf T}y+c(u).
\end{align*}
By Lemma~\ref{lem:quadratic}, $\det H(u)$ is nonzero. Let $K=\C(u)$, the fraction field of the polynomial ring in the $u$ variables. Over $K$ the matrix $H$ is invertible, so the equations $f_y=Hy+L$ are equivalent to $y=-H^{-1}L$. It follows that
\begin{align*}
K[y]/(f_y)\simeq K.
\end{align*}
In particular, $(f_y)$ is a proper ideal of $K[y]$.

On the other hand, $a\in\sqrt{I_r}$ means that $a^N\in(f_y)$ for some $N\geq1$. Since $a$ is a nonzero polynomial in $u$, it becomes a unit in $K$. After extension to $K[y]$, the ideal $(f_y)$ would contain a unit. This contradicts its being proper. Both ranges of $r$ are excluded.
\end{proof}

\section{Proof of the main theorem}\label{sec:mainproof}

A locally nilpotent derivation $\delta$ of a $\C$-algebra $R$ gives automorphisms
\begin{align*}
\exp(t\delta)(a)=\sum_{\nu\geq0}\frac{t^\nu}{\nu!}\delta^\nu(a)\qquad(a\in R).
\end{align*}
The sum is finite for each $a$. The product rule implies that this map is an algebra homomorphism. Its inverse is $\exp(-t\delta)$, and the maps satisfy the group law for $(\C,+)$. They give the associated additive-group action.

\begin{bluepassage}
We use the Nullstellensatz in the following form: an element of a finitely generated $\C$-algebra vanishes at every complex point if and only if it is nilpotent. For a complex point $p$ and an element $a\in R$, write $a(p)$ for its value at $p$.
\end{bluepassage}

\begin{lemma}\label{lem:orbit}
Let $R=\bigoplus_{a\geq0}R_a$ be a finitely generated graded $\C$-algebra with $R_0=\C$. Suppose that homogeneous elements $z_1,\ldots,z_N$ generate it, with positive integer degrees $w_j\leq W$. Let $q_1,\ldots,q_m$ be homogeneous elements of degrees $d_i>W$ such that $R/(q_1,\ldots,q_m)$ is finite dimensional. If $\delta$ is a locally nilpotent derivation of $R$ and
\begin{align*}
\delta^2q_i=0\qquad(1\leq i\leq m),
\end{align*}
then the derivation induced by $\delta$ on $R_{\red}$ is zero. The derivation $\delta$ is not required to be homogeneous.
\end{lemma}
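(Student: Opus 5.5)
The plan is to work geometrically on the affine variety $X=\Spec R$, embedded in $\C^N$ by the generators $z_j$, and to use the additive-group action $\exp(t\delta)$ described above. Since $\delta$ is locally nilpotent, $\exp(t\delta)$ is an automorphism of $R$ for every $t\in\C$. It therefore gives an algebraic action of $(\C,+)$ on the set $X(\C)$ of complex points. For a point $p$, the orbit map $t\mapsto \sigma_t(p)$ is determined by $z_j(\sigma_t(p))=(\exp(t\delta)z_j)(p)$. Each coordinate of this map is a polynomial in $t$, because only finitely many $\delta^\nu z_j$ are nonzero.

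The hypothesis $\delta^2q_i=0$ gives
\[
\exp(t\delta)q_i=q_i+t\,\delta q_i.
\]
So along each orbit the function $q_i$ is affine-linear in $t$, with value $q_i(p)+t(\delta q_i)(p)$.

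The key step uses the weighted scaling. Let $\lambda\in\C^*$ act on $X$ by $\lambda\cdot p$, defined through $z_j\mapsto\lambda^{w_j}z_j$ on coordinates. This is legitimate because $R$ is graded. The finite dimensionality of $R/(q)$, together with the Nullstellensatz form stated above, shows that the common zero set of the $q_i$ on $X$ is the vertex $0$. By homogeneity and compactness of the weighted unit sphere in $\C^N$, I would aim for an inequality of Łojasiewicz type:
\[
\max_i |q_i(p)|^{1/d_i}\ \geq\ c\,\|p\|_w \quad\text{on } X(\C),
\]
where $\|p\|_w=\max_j|z_j(p)|^{1/w_j}$ and $c>0$.

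Fix $p$ and consider the orbit curve $\gamma(t)=\sigma_t(p)$. Each $z_j(\gamma(t))$ is a polynomial of some degree $m_j$, and $\|\gamma(t)\|_w$ grows like $|t|^{\max_j m_j/w_j}$. On the other hand, $|q_i(\gamma(t))|^{1/d_i}=O(|t|^{1/d_i})$. Since $d_i>W\geq w_j$, we get $1/d_i<1/w_j\leq m_j/w_j$ whenever some $m_j\geq1$. The inequality would then give a contradiction for large $|t|$. Hence every $m_j=0$, so every orbit is a single point.

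It follows that $(\delta^\nu z_j)(p)=0$ for all $\nu\geq1$, all $j$, and all points $p$. By the Nullstellensatz, each $\delta z_j$ is nilpotent. The image of $\delta$ on $R_{\red}$ is a derivation, and it kills every generator $z_j$. It is therefore zero. Note that this requires $\delta$ to descend to $R_{\red}$. This holds because a derivation maps the nilradical into itself: if $a^k=0$, then applying $\delta^{k}$ to $a^{2k}=0$ shows that $\delta a$ is nilpotent. Homogeneity of $\delta$ is never used.

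The main obstacle is the lower bound in the key step, and it must hold along entire orbits, not just near the vertex. I would prove it by the scaling argument. Suppose $p$ lies on $X$ with $\|p\|_w=1$; this set is compact in $\C^N$ and closed since $X$ is. On it, $\max_i|q_i|^{1/d_i}$ is continuous and has no zero, because the only common zero is the vertex. So this maximum is at least some $c>0$. Homogeneity, namely $q_i(\lambda\cdot p)=\lambda^{d_i}q_i(p)$ with $\lambda$ chosen real positive, extends the bound to all of $X$.

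A second subtlety is the comparison of growth rates along the orbit curve. We need $\|\gamma(t)\|_w\geq C|t|^{1/W}$ for large $|t|$ whenever some coordinate is nonconstant. This holds because such a coordinate $z_j(\gamma(t))$ has modulus at least of order $|t|$, so its $1/w_j$ power is at least of order $|t|^{1/W}$. Comparing with the upper bound $O(|t|^{1/\min_i d_i})$ then gives the required contradiction, since $\min_i d_i>W$.
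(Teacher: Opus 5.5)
Your argument is correct. It shares the paper's underlying idea: push an additive orbit to infinity, rescale it with the weighted $\C^*$-action, and use $d_i>W$ to make every $q_i$ vanish at a nonzero limit point. The key step, however, is carried out differently.

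The paper's version is algebraic. It fixes one point $p$ with $(\delta z_j)(p)\neq0$ and sets $\alpha=\min w_j/N_j=a/b$. Substituting $t=\zeta^{-a}$ and $\lambda=\zeta^{b}$ gives a \emph{polynomial} curve $\gamma(\zeta)$ in $\Spec R$. Setting $\zeta=0$ gives a point with a nonzero coordinate at which every $q_i$ vanishes, since $q_i(\gamma(\zeta))=\zeta^{bd_i}q_i(p)+\zeta^{bd_i-a}(\delta q_i)(p)$.

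You instead prove the global bound $\max_i|q_i|^{1/d_i}\geq c\|p\|_w$ on $X(\C)$ by compactness of the weighted unit sphere, and then compare growth rates along the orbit. This needs only the crude estimate $\|\gamma(t)\|_w\geq C|t|^{1/W}$ and no choice of the extremal ratio $w_j/N_j$. Your version is shorter and conceptually transparent, but it relies on the Euclidean topology of $\C^N$. The paper's limit-curve construction uses only that polynomial relations holding for $\zeta\neq0$ persist at $\zeta=0$. It therefore needs no topology and works over any algebraically closed field of characteristic zero, and it is easier to formalize, which matters here since the paper is Lean-verified.

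One slip: your justification that $\delta$ preserves the nilradical fails as written. When you apply $\delta^{k}$ to $a^{2k}$, every term of the Leibniz expansion still contains at least $k$ undifferentiated factors $a$, so you only obtain $0=0$. The correct one-line argument applies $\delta^{k}$ to $a^{k}=0$. This gives $k!\,(\delta a)^{k}\in aR$, hence $(\delta a)^{k^2}=0$. (The paper instead uses that each $\exp(t\delta)$ preserves nilpotents, together with a Vandermonde argument.)

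You do not actually need this step separately. Once every orbit is constant, $(\delta g)(p)=\frac{d}{dt}g(\gamma(t))\big|_{t=0}=0$ for every $g\in R$ and every point $p$. Hence $\delta(R)\subseteq\sqrt{(0)}$, which gives both the descent to $R_{\red}$ and the vanishing of the induced derivation at once.
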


\begin{proof}
First, $\delta$ preserves the nilradical. For a nilpotent element $a$, each automorphism $\exp(t\delta)$ sends $a$ to a nilpotent element. Thus, in $R_{\red}$, the polynomial
\begin{align*}
\sum_{\nu\geq0}\frac{t^\nu}{\nu!}[\delta^\nu a]
\end{align*}
is zero for every $t\in\C$. Evaluate it at more distinct scalars than its degree. The invertibility of the resulting Vandermonde matrix shows that each coefficient is zero. In particular $\delta a$ is nilpotent. Hence $\delta$ induces a derivation on $R_{\red}$.

Assume that this induced derivation is nonzero. Since the classes of the $z_j$ generate $R_{\red}$, at least one $\delta z_j$ is nonzero there. The Nullstellensatz gives a complex point $p$ at which this element does not vanish. The orbit through $p$ has coordinate polynomials
\begin{align*}
P_j(t)=\bigl(\exp(t\delta)z_j\bigr)(p).
\end{align*}
At least one $P_j$ is nonconstant because $P_j'(0)=(\delta z_j)(p)$ is nonzero for some $j$.

For every nonconstant $P_j$, let $N_j\geq1$ be its ordinary degree in $t$. Define
\begin{align*}
\alpha=\min_{P_j\text{ nonconstant}}\frac{w_j}{N_j}=\frac{a}{b},
\end{align*}
where $a,b$ are positive integers. We have $0<\alpha\leq W$. Introduce a new parameter $\zeta$ and set
\begin{align*}
\gamma_j(\zeta)=\zeta^{bw_j}P_j(\zeta^{-a}).
\end{align*}
Each $\gamma_j$ is a polynomial in $\zeta$. Indeed, if a term of $P_j$ has degree $k\leq N_j$, its exponent after substitution is $bw_j-ak\geq bw_j-aN_j\geq0$. For a constant $P_j$, the exponent is $bw_j>0$. A zero polynomial causes no difficulty.

Choose an index attaining the minimum defining $\alpha$. The leading term of its $P_j$ gives exponent zero, and all lower terms give positive exponents. Its leading coefficient is nonzero. Therefore $\gamma_j(0)\ne0$ for this index.

The grading defines the scaling action $z_j\mapsto\lambda^{w_j}z_j$. For $\zeta\ne0$, the tuple $\gamma(\zeta)$ is obtained by first taking the additive orbit at $t=\zeta^{-a}$, and then scaling with $\lambda=\zeta^b$. It is therefore a point of $\Spec R$. Every defining polynomial relation vanishes on $\gamma(\zeta)$ for nonzero $\zeta$. Since each coordinate is polynomial, the relation also vanishes at $\zeta=0$. Thus $\gamma(0)$ is a point of $\Spec R$.

Because $\delta^2q_i=0$, the orbit of $q_i$ is affine in $t$:
\begin{align*}
\bigl(\exp(t\delta)q_i\bigr)(p)=q_i(p)+t(\delta q_i)(p).
\end{align*}
Homogeneity now gives, for $\zeta\ne0$,
\begin{align*}
q_i(\gamma(\zeta))
=\zeta^{bd_i}q_i(p)+\zeta^{bd_i-a}(\delta q_i)(p).
\end{align*}
Both exponents are positive. For the second, use $d_i>W\geq\alpha=a/b$. Hence $q_i(\gamma(0))=0$ for every $i$.

The only common zero of the $q_i$ is the point where all $z_j$ vanish. To check this, note that the quotient by the $q_i$ is finite dimensional and graded. It has zero homogeneous components in all sufficiently large degrees. A sufficiently high power of each positive-degree $z_j$ is therefore zero in that quotient. Thus every common zero of the $q_i$ has all $z_j=0$.

This contradicts the nonzero coordinate of $\gamma(0)$ found above. The induced derivation on $R_{\red}$ must be zero.
\end{proof}

\begin{remark}
Zero elements among the $q_i$ may be omitted. The argument does not require $R$ to be reduced, a domain, or Cohen--Macaulay. It also does not require the number of $q_i$ to equal $\dim R$.
\end{remark}

%\subsection{The induced action on an ideal of partial derivatives}

\begin{lemma}\label{lem:quotient}
Let $D=\sum_jh_j\partial_j$ be homogeneous of degree $-s<0$. Assume that $D(J)\subseteq J$ and $D^2f=0$. Fix a positive integer $r$ such that $h_j=0$ whenever $w_j\leq r$. Then $D$ preserves $I_r$, and its induced derivation on $(S/I_r)_{\red}$ is zero.
\end{lemma}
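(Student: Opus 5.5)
First I show that $D$ preserves $I_r$, and then I apply Lemma~\ref{lem:orbit} to $R=S/I_r$ with the generators $q$ taken to be the images of the $f_i$ with $w_i\leq r$.

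\emph{Step 1: $D(I_r)\subseteq I_r$.} Take $i$ with $w_i>r$. Since $D$ is homogeneous of degree $-s$, we have $D(f_i)\in J$ of degree $d-w_i-s$. Write $D(f_i)=\sum_k c_kf_k$ with homogeneous $c_k$ of degree $w_k-w_i-s$, using homogeneous components. If $w_k\leq r<w_i$, this degree is negative, so $c_k=0$. Hence $D(f_i)\in I_r$. Together with the product rule this gives $D(I_r)\subseteq I_r$, so $D$ induces a derivation $\delta$ on $R=S/I_r$.

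\emph{Step 2: local nilpotence and the hypothesis $\delta^2q=0$.} $D$ is locally nilpotent on $S$ by Proposition~\ref{prop:minimum}; alternatively, it lowers degree and $S$ has no negative part. Hence $\delta$ is locally nilpotent on $R$. Next I compute $D^2 f_i$ for $w_i\leq r$. Since $\partial_i$ commutes with $D$ up to the terms $\sum_j(\partial_ih_j)\partial_j$, consider the commutator $[\partial_i,D]=\sum_j(\partial_ih_j)\partial_j$. The coefficient $h_j$ is nonzero only for $w_j>r$ and has degree $w_j-s<w_j$. Then
\[
0=\partial_i(D^2f)=D^2f_i+[\partial_i,D]Df+D[\partial_i,D]f.
\]
The last two terms involve $\partial_j$ applied to $Df$ or $f$ for $j$ with $w_j>r$. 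The term $[\partial_i,D]f=\sum_j(\partial_ih_j)f_j$ lies in $I_r$, so $D$ of it lies in $I_r$ by Step 1. For the term $[\partial_i,D](Df)=\sum_j(\partial_ih_j)\partial_j(Df)$, note that $\partial_j(Df)=D f_j+\sum_k(\partial_jh_k)f_k$. Here $Df_j\in I_r$ for $w_j>r$ by Step 1, and the remaining $f_k$ have $w_k>r$ because $h_k\neq0$ forces this. Hence $D^2f_i\in I_r$, that is, $\delta^2 q_i=0$ in $R$. I regard this bookkeeping of commutator terms as the main point to check carefully.

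\emph{Step 3: apply Lemma~\ref{lem:orbit}.} $R$ is graded with $R_0=\C$ and is generated by the images of the $x_j$, of degrees at most $W$. The $q_i$ are homogeneous of degree $d-w_i\geq d-W>W$. Moreover $R/(q_i)=S/J$ is finite dimensional because of the isolated critical point. Lemma~\ref{lem:orbit} then shows that $\delta$ induces zero on $R_{\red}=(S/I_r)_{\red}$, which is the claim.
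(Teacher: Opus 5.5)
Your proof is correct and follows the paper's argument: the same degree count gives $D(I_r)\subseteq I_r$, and your commutator expansion $\partial_i D^2=D^2\partial_i+D[\partial_i,D]+[\partial_i,D]D$ is the same identity the paper obtains via $g=Df$, $g_i\equiv D(f_i)\pmod{I_r}$ and $D(g_i)\in I_r$, followed by the same application of Lemma~\ref{lem:orbit}. One citation needs changing: local nilpotence should rest on the degree-lowering argument you give as the alternative (as the paper does), since the $D$ in this lemma is arbitrary and need not be the one from Proposition~\ref{prop:minimum}.
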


\begin{proof}
For each $i$, the polynomial $D(f_i)$ belongs to $J$ and has degree $d-w_i-s$. Taking homogeneous coefficients gives
\begin{align*}
D(f_i)=\sum_j c_{ij}f_j,\qquad c_{ij}\in S_{w_j-w_i-s}.
\end{align*}
If $w_i>r$ and $c_{ij}\ne0$, then $w_j-w_i-s\geq0$, so $w_j\geq w_i+s>r$. Every term belongs to $I_r$. This proves $D(I_r)\subseteq I_r$.

Put $g=Df$ and $g_i=\partial_i g$. By the product rule and the hypothesis on the coefficients $h_j$,
\begin{align*}
g_i=D(f_i)+\sum_{w_j>r}(\partial_i h_j)f_j.
\end{align*}
Therefore $D(f_i)\equiv g_i\pmod{I_r}$ for every $i$. When $w_i>r$, the already proved stability of $I_r$ also gives $g_i\in I_r$.

The assumption $D^2f=0$ says $Dg=0$. Differentiate this equality with respect to $x_i$:
\begin{align*}
0=\partial_i(Dg)=D(g_i)+\sum_{w_j>r}(\partial_i h_j)g_j.
\end{align*}
For every index in the sum, $g_j\in I_r$. Consequently $D(g_i)\in I_r$ for all $i$. Apply $D$ to $D(f_i)-g_i\in I_r$. Since $D(I_r)\subseteq I_r$, we obtain
\begin{align*}
D^2(f_i)-D(g_i)\in I_r,\qquad\text{hence }D^2(f_i)\in I_r.
\end{align*}

Now let $R=S/I_r$. It is graded, has $R_0=\C$, and is generated by the classes of the variables, of degrees at most $W$. The negative degree of $D$ makes it locally nilpotent, by the degree argument in Lemma~\ref{lem:lift}. Its induced derivation on $R$ is also locally nilpotent. For the indices with $w_i\leq r$, take $q_i$ to be the image of $f_i$ in $R$. Each nonzero such element is homogeneous of degree
\begin{align*}
d-w_i\geq d-W>W.
\end{align*}
The remaining partial derivatives are already zero in $R$. Thus
\begin{align*}
R/(q_i:w_i\leq r)=S/J,
\end{align*}
which is finite dimensional. The calculation above gives $D^2q_i=0$. Lemma~\ref{lem:orbit} applies and shows that the induced derivation on $R_{\red}$ is zero.
\end{proof}

%\subsection{Completion of the proof}

\begin{proof}[Proof of the Main Theorem]
Multiply the positive rational weights by a common positive integer. This changes no sign of a derivation degree. The function is a polynomial because all weights are positive. Lemma~\ref{lem:analytic} identifies its analytic Milnor algebra with the polynomial one. Euler's identity identifies this algebra with the moduli algebra.

By Lemma~\ref{lem:splitting}, a graded algebra isomorphism carries degree-$k$ derivations to degree-$k$ derivations by conjugation. Thus it suffices to prove the assertion for the algebra after splitting. If that algebra is $\C$, every $\C$-linear derivation is zero. Otherwise we have $f\in\m^3$ and $d>2W$.

Suppose, for a contradiction, that there is a nonzero homogeneous derivation of negative degree. Proposition~\ref{prop:minimum} gives a nonzero polynomial derivation $D$ of degree $-s<0$ such that
\begin{align*}
D(J)\subseteq J,\qquad D^2f=0,\qquad h_i=D(x_i)\in\m\cap S_{w_i-s}.
\end{align*}
Choose an index $k$ for which $h_k\ne0$ and $w_k$ is as small as possible. Set
\begin{align*}
h=h_k,\qquad r=w_k-s.
\end{align*}
The polynomial $h$ is nonzero, homogeneous, and belongs to $\m$. Hence its degree $r$ is a positive integer. Since $s>0$, we also have $r<w_k$.

If $w_i\leq r$, then $w_i<w_k$. By the choice of $k$, $h_i=0$. The hypotheses of Lemma~\ref{lem:quotient} are therefore satisfied. It follows that the induced derivation on $(S/I_r)_{\red}$ is zero.

In particular, the class of $D(x_k)=h$ is zero in $(S/I_r)_{\red}$. By the definition of reduction, this means that the image of $h$ in $S/I_r$ is nilpotent. Equivalently, $h^N\in I_r$ for some $N\geq1$, so
\begin{align*}
0\ne h\in\sqrt{I_r}\cap S_r.
\end{align*}
This contradicts Theorem~\ref{thm:radical}. Thus no nonzero homogeneous negative derivation exists. Undoing the graded isomorphisms from the splitting gives the stated conclusion for the original Milnor and moduli algebras.
\end{proof}

\begin{remark}
For a finite-dimensional graded algebra, every derivation is a finite sum of homogeneous derivations. Indeed, for homogeneous $a\in A_j$, project $\delta(a)$ onto $A_{j+k}$ to define its degree-$k$ component. Projecting the product rule shows that each component is a derivation. Only finitely many components occur because only finitely many $A_j$ are nonzero. The Main Theorem therefore says that the entire derivation algebra has no negative graded part.
\end{remark}

\appendix
\section{Local algebras}\label{app:local}
\begin{bluepassage}
We recall the commutative algebra used in Lemma~\ref{lem:mu}. All rings are commutative with identity. We write $\length_R M$ for module length and $\Min R$ for the set of minimal prime ideals. Over a local $\C$-algebra with residue field $\C$, finite length equals dimension over $\C$.

The rings $S_{\m}$ and $\Oan=\C\{x_1,\ldots,x_n\}$ are regular local rings of dimension $n$, hence Cohen--Macaulay. In a Cohen--Macaulay local ring, every system of parameters is a regular sequence in any order. A quotient by $c$ members of a regular sequence is Cohen--Macaulay of dimension lower by $c$; see \cite[Tags~\href{https://stacks.math.columbia.edu/tag/00NQ}{00NQ} and \href{https://stacks.math.columbia.edu/tag/02JN}{02JN}]{Stacks}.
\end{bluepassage}

Since $J$ has finite colength, $f_1,\ldots,f_n$ is a regular sequence in $S_{\m}$ and in $\Oan$. It is also regular in $S$. To see the last assertion, let $H$ be any homogeneous ideal generated by an initial part of this sequence. The map $S/H\to(S/H)_{\m}$ is injective. Indeed, if $ub=0$ in $S/H$ for a polynomial $u$ with $u(0)\ne0$ and a nonzero element $b$, the component of least degree of $ub$ is $u(0)$ times the component of least degree of $b$. It cannot vanish. A failure of regularity in $S$ would therefore remain a failure after localization, which is impossible. The same argument applies after any reordering.

\begin{bluepassage}
Let $Q$ be a parameter ideal in an $r$-dimensional Cohen--Macaulay local ring $R$. Its Hilbert--Samuel multiplicity satisfies
\[
\length_R(R/Q)=e(Q,R).
\]
This follows from \cite[Tag~\href{https://stacks.math.columbia.edu/tag/0AZZ}{0AZZ}]{Stacks}, since the higher Koszul homology groups of a regular sequence vanish. The associativity formula gives
\[
e(Q,R)=\sum_{\substack{\p\in\Min R\\\dim R/\p=r}}
\length_{R_{\p}}(R_{\p})\,e(Q(R/\p),R/\p);
\]
see \cite[Tag~\href{https://stacks.math.columbia.edu/tag/0AZX}{0AZX}]{Stacks}. Here $Q(R/\p)$ denotes the image of $Q$ in $R/\p$. In Lemma~\ref{lem:mu}, these formulas are applied to a quotient of $\Oan$ by part of the gradient sequence.
\end{bluepassage}

\begin{lemma}\label{lem:analytic}
If the weighted homogeneous polynomial $f$ has an isolated critical point at $0$, then $\sqrt J=\m$. Moreover, the natural map
\begin{align*}
S/J\longrightarrow\Oan/J\Oan
\end{align*}
is an isomorphism. In particular, $A$ is a finite-dimensional local algebra with maximal ideal $\m/J$.
\end{lemma}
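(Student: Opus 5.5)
First I would show $\sqrt J=\m$. Weighted homogeneity makes $J$ a homogeneous ideal. Its zero set $V(J)\subset\C^n$ is therefore invariant under the scaling action $x_i\mapsto\lambda^{w_i}x_i$, $\lambda\in\C^*$. Because all weights are positive, the closure of every $\C^*$-orbit contains the origin. If $V(J)$ contained a point $p\ne0$, it would contain the whole curve $\lambda\mapsto\lambda\cdot p$, which accumulates at $0$ as $\lambda\to0$. Then $0$ would be a non-isolated critical point, contrary to the hypothesis. So $V(J)=\{0\}$, and the Nullstellensatz gives $\sqrt J=\m$. Consequently $\m^N\subseteq J$ for some $N$, and $S/J$ is finite dimensional and local with maximal ideal $\m/J$.

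Next I would prove that $S/J\to\Oan/J\Oan$ is an isomorphism. From $\m^N\subseteq J$ I get $\m^N\Oan\subseteq J\Oan$, where $\m\Oan$ is the maximal ideal of $\Oan$.

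For surjectivity, take $g\in\Oan$. Write $g=p+r$, where $p$ is the Taylor polynomial of $g$ of ordinary degree $<N$ and $r\in\m^N\Oan\subseteq J\Oan$. Then the class of $g$ is the image of $p$.

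For injectivity, suppose $p\in S$ lies in $J\Oan$. The plan is to compare both sides modulo $\m^N$. Note that $S/\m^N\cong\Oan/\m^N\Oan$, by truncating power series. Write $p=\sum_i c_if_i$ with $c_i\in\Oan$, and truncate each $c_i$ to a polynomial $\tilde c_i$. Then
\[
p-\sum_i\tilde c_if_i\in\m^N\Oan\cap S .
\]
A polynomial lying in $\m^N\Oan$ has all monomials of ordinary degree at least $N$, so this intersection equals $\m^N\subseteq J$. Hence $p\in J$, which proves injectivity.

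The only step with any content is the orbit-closure argument for $V(J)=\{0\}$. There I would be careful that "isolated critical point at $0$" is a local condition, whereas the argument uses the global zero set. The scaling curve through any nonzero critical point enters every neighbourhood of $0$, so the local hypothesis already excludes such points, and no global assumption is needed.
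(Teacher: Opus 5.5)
Your proposal is correct. The first half is the paper's argument: a nonzero critical point would put its whole scaling orbit, which accumulates at $0$, into the critical set, and the Nullstellensatz then gives $\sqrt J=\m$. Your surjectivity step is also the paper's.

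The genuine difference is in injectivity. You argue $\m$-adically: truncating the analytic coefficients $c_i$ below ordinary degree $N$ leaves a polynomial in $\m^N\Oan\cap S=\m^N\subseteq J$. The paper instead splits each analytic coefficient $b_i$ into weighted homogeneous components, each a polynomial because the weights are positive. Comparing components degree by degree gives $p_a=\sum_i(b_i)_{a-d+w_i}f_i\in J$ for every $a$.

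Your version uses only $\m^N\subseteq J$. It therefore proves $S/J\cong\Oan/J\Oan$ for any polynomial ideal of finite colength supported at the origin, homogeneous or not. The paper's version uses homogeneity but not finite colength. It therefore shows that a polynomial lying in $I\Oan$ already lies in $I$ for any ideal $I$ generated by weighted homogeneous polynomials. The paper records this extension explicitly after the lemma, and the same component extraction reappears at the end of the proof of Lemma~\ref{thm:varchenko}.
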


\begin{proof}
For $\lambda\in\C^*$, define $\lambda\cdot x=(\lambda^{w_1}x_1,\ldots,\lambda^{w_n}x_n)$. Homogeneity gives
\begin{align*}
f_i(\lambda\cdot x)=\lambda^{d-w_i}f_i(x).
\end{align*}
If a nonzero point $p$ were critical, then every $\lambda\cdot p$ would be critical. As $\lambda$ tends to zero, these nonzero points tend to zero. This contradicts the isolated critical point assumption. Thus the common zero set of $J$ in $\C^n$ is $\{0\}$. \blueedit{The Hilbert Nullstellensatz gives $\sqrt J=\m$.}

For each $i$, choose $N_i$ with $x_i^{N_i}\in J$. If $N=1+\sum_i(N_i-1)$, every monomial of ordinary degree at least $N$ is divisible by some $x_i^{N_i}$. Hence $\m^N\subset J$. Modulo $J\Oan$, a convergent power series equals its Taylor polynomial of ordinary degree less than $N$. This proves surjectivity of the displayed map.

To prove injectivity, suppose that $p\in S$ and $p=\sum_i b_if_i$ with $b_i\in\Oan$. Expand each $b_i$ into its weighted homogeneous components. Each such component is a polynomial, since all weights are positive. Taking the component of degree $a$ gives
\begin{align*}
p_a=\sum_i (b_i)_{a-d+w_i}f_i\in J.
\end{align*}
The polynomial $p$ has finitely many components. Their sum belongs to $J$. Thus the map is injective. The quotient is finite dimensional because $\m^N\subset J$. It is local because its only maximal ideal is $\m/J$.
\end{proof}

We will also use the argument in the last paragraph for an ideal generated by any list of homogeneous polynomials. In particular, analytic membership of a homogeneous polynomial in such an ideal implies polynomial membership.

\section{Locally nilpotent derivations}\label{app:derivations}

Throughout this appendix, $f\in S_d\cap\m^3$ has an isolated critical
point at the origin and $d>2W$, where $W=\max_i w_i$, as obtained
after the reduction in Lemma~\ref{lem:splitting}.

\begin{lemma}\label{lem:syzygy}
Suppose that $q<2(d-W)$, that $a_i\in S_{q-d+w_i}$ for each $i$, and that $\sum_i a_if_i=0$. Then all coefficients $a_i$ are zero. Consequently, an expression $g=\sum_i a_if_i$ with $g$ homogeneous of degree less than $d$ has unique homogeneous coefficients of degrees $\deg g-d+w_i$.
\end{lemma}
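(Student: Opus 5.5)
We will prove Lemma~\ref{lem:syzygy} from the Koszul complex of the regular sequence $f_1,\ldots,f_n$ in $S$; regularity in $S$ was established in Appendix~\ref{app:local}. Exactness of the Koszul complex at the first step says that every syzygy $(a_1,\ldots,a_n)$ with $\sum_i a_if_i=0$ is a combination of the trivial Koszul syzygies. That is, there are polynomials $b_{ij}=-b_{ji}$ with
\begin{align*}
a_i=\sum_j b_{ij}f_j .
\end{align*}
Since the $f_i$ and $a_i$ are homogeneous, we may take homogeneous components and assume $b_{ij}$ is homogeneous of degree $\deg a_i-\deg f_j=(q-d+w_i)-(d-w_j)$.

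The key step is a degree count. If $a_i\ne0$, some $b_{ij}\ne0$, so
\begin{align*}
q-2d+w_i+w_j\geq0,\qquad\text{i.e.}\qquad q\geq 2d-w_i-w_j\geq 2(d-W).
\end{align*}
This contradicts the hypothesis $q<2(d-W)$. More cleanly, every element of $J$ has degree at least $d-W$, so $a_i\in J$ and $\deg a_i=q-d+w_i<d-W$ together force $a_i=0$. We will check that $q-d+w_i<d-W$: from $q<2d-2W$ we get $q-d+w_i<d-2W+w_i\leq d-W$. Hence all $a_i=0$.

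For the consequence, suppose $g$ is homogeneous with $\deg g<d$ and has two expressions $\sum_i a_if_i=\sum_i a'_if_i$. First pass to the homogeneous components of the coefficients in the degrees $\deg g-d+w_i$; this does not change the sum. The differences then form a homogeneous syzygy of total degree $q=\deg g<d<2(d-W)$, where the last inequality uses $d>2W$. The first part therefore gives $a_i=a'_i$.

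The only real input is the exactness of the Koszul complex for a regular sequence in the graded ring $S$, which is standard once regularity in $S$ is known. We expect no significant obstacle beyond keeping track of the homogeneous degrees.
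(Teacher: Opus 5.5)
Your proof is correct. It differs from the paper's only in how regularity of $f_1,\ldots,f_n$ in $S$ is used.

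You invoke acyclicity of the Koszul complex ($H_1=0$) to conclude that every coefficient $a_i$ lies in $J$. The paper uses just the definition of a regular sequence at the largest index $j$ with $a_j\ne0$. From $a_jf_j\in(f_1,\ldots,f_{j-1})$ it gets $a_j\in(f_1,\ldots,f_{j-1})$, handling $j=1$ by $f_1$ being a nonzero divisor.

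Both arguments then finish with the same observation. A homogeneous ideal generated by some of the $f_i$ has no nonzero element of degree below $d-W$, while the coefficient has degree $q-d+w_i<d-W$.

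Your route gives the stronger, order-independent conclusion $a_i\in J$ for every $i$. The cost is citing the standard theorem on Koszul complexes of regular sequences, which is itself proved by essentially the paper's induction. The paper's version is a single step from the definition, which suits its self-contained formal verification.

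Your second (``cleaner'') argument needs only $a_i\in J$ and the homogeneity of $a_i$, since $J$ is a homogeneous ideal. So the step of choosing homogeneous antisymmetric $b_{ij}$, and the first degree count using them, can be dropped.
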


\begin{proof}
Suppose some $a_i$ is nonzero. Let $j$ be the largest such index. If $j=1$, then $a_1f_1=0$ contradicts regularity. If $j>1$, the relation implies
\begin{align*}
a_jf_j\in(f_1,\ldots,f_{j-1}).
\end{align*}
Since $f_j$ is a nonzero divisor modulo this ideal, $a_j$ belongs to $(f_1,\ldots,f_{j-1})$. This homogeneous ideal has no nonzero element of degree less than $d-W$. Hence $\deg a_j\geq d-W$. Since $\deg f_j\geq d-W$, the total degree is at least $2(d-W)$, a contradiction.

For uniqueness, subtract two homogeneous expressions for $g$. Their difference is a relation of total degree $\deg g<d<2(d-W)$. The first assertion applies.
\end{proof}

\begin{lemma}\label{lem:maximal}
Every derivation of a finite-dimensional local $\C$-algebra preserves its maximal ideal.
\end{lemma}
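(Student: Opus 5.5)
The plan is to show that any derivation $\delta$ of a finite-dimensional local $\C$-algebra $A$ with maximal ideal $\m_A$ satisfies $\delta(\m_A)\subseteq\m_A$.

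First, $\m_A$ is nilpotent. Since $A$ is finite dimensional, it is Artinian, and in an Artinian local ring the maximal ideal equals the nilradical. Hence every $a\in\m_A$ satisfies $a^N=0$ for some $N\geq1$. Because the residue field is $\C$, every element of $A$ can be written as $a=c+m$ with $c\in\C$ and $m\in\m_A$. Also $\delta(1)=\delta(1\cdot1)=2\delta(1)$ gives $\delta(1)=0$, so $\delta$ kills $\C$.

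Now fix $a\in\m_A$ and choose $N$ minimal with $a^N=0$. If $N=1$, then $a=0$ and there is nothing to prove. Suppose $N\geq2$ and write $\delta(a)=c+m$ with $c\in\C$ and $m\in\m_A$. Applying $\delta$ to $a^N=0$ gives $0=Na^{N-1}\delta(a)=Nc\,a^{N-1}+Na^{N-1}m$. The step I expect to need care is extracting $c=0$ from this, since $a^{N-1}m$ need not vanish. For that I will use the filtration by powers $a^k$ and argue by downward induction, or more cleanly as follows.

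The argument goes through $1+\m_A$ and units. Suppose $c\neq0$. Then $\delta(a)$ is a unit $u$, and $a^{N-1}u=0$ (the characteristic is zero, so $N$ is invertible) gives $a^{N-1}=0$. This contradicts the minimality of $N$. Hence $c=0$ and $\delta(a)\in\m_A$.

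A second route to the same conclusion uses the characterization $\m_A=\{a:\ a^N=0\text{ for some }N\}$. One could use the Leibniz formula $\delta^k$ to show $\delta(a)$ is nilpotent, but the unit argument above is shorter and suffices. I would present that version.
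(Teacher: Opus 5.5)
Your proof is correct and is the same argument as the paper's: the maximal ideal is nilpotent, you take the least $N$ with $a^N=0$, and $0=Na^{N-1}\delta(a)$ forces $a^{N-1}=0$ if $\delta(a)$ were a unit. The decomposition $\delta(a)=c+m$ and the remarks about a filtration or a Leibniz-formula route are unnecessary, and you can drop them.
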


\begin{proof}
\blueedit{Let $B$ be such an algebra and let $\mathfrak n$ be its maximal ideal. Since $B$ is Artinian, $\mathfrak n$ is nilpotent.}

Take $0\ne a\in\mathfrak n$, and choose the least integer $N$ with $a^N=0$. If $\delta(a)\notin\mathfrak n$, it is a unit. Applying $\delta$ gives
\begin{align*}
0=\delta(a^N)=Na^{N-1}\delta(a).
\end{align*}
Multiplication by the inverse of $N\delta(a)$ would give $a^{N-1}=0$, contrary to the choice of $N$. Therefore $\delta(a)\in\mathfrak n$. The assertion for $a=0$ is immediate.
\end{proof}

\begin{lemma}\label{lem:lift}
\rev{Let $s$ be a positive integer.} Every $\delta\in\Der(A)_{-s}$ has a unique lift
\begin{align*}
D=\sum_{i=1}^n h_i\frac{\partial}{\partial x_i},\qquad
h_i\in\m\cap S_{w_i-s},\qquad D(J)\subseteq J.
\end{align*}
The derivation $D$ is locally nilpotent: for every $p\in S$, some power $D^N(p)$ is zero.
\end{lemma}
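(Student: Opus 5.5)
The lift is built by choosing preimages of $\delta$ on the generators and then taking homogeneous components. Uniqueness comes from Lemma~\ref{lem:syzygy}, and local nilpotence from a degree count. Let $\delta\in\Der(A)_{-s}$. For each $i$, the class $\delta(\bar x_i)$ lies in $A_{w_i-s}$. Since $A=S/J$ with $J$ homogeneous, we can choose a homogeneous representative $h_i\in S_{w_i-s}$, taking $h_i=0$ when $w_i-s<0$. We then put $D=\sum_i h_i\partial_i$.

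\textbf{Step 1: $D$ lifts $\delta$.} The derivation $D$ and $\delta$ agree on the generators, so $\overline{D(p)}=\delta(\bar p)$ for every polynomial $p$. This follows by induction on monomials via the product rule. Applying it to $p\in J$ gives $\overline{D(p)}=\delta(0)=0$, hence $D(J)\subseteq J$.

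\textbf{Step 2: coefficients in $\m$.} An $h_i$ can fail to lie in $\m$ only if it is a nonzero constant, which forces $w_i=s$. By Lemma~\ref{lem:analytic}, $A$ is a finite-dimensional local algebra with maximal ideal $\m/J$. Lemma~\ref{lem:maximal} then gives $\delta(\bar x_i)\in\m/J$. A nonzero constant class would not lie there, because $J\subseteq\m$ implies $\C\cap(\m+J)=\C\cap\m=0$. Hence $h_i\in\m$.

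\textbf{Step 3: uniqueness.} Suppose $D'=\sum_i h_i'\partial_i$ is another lift with the same properties. Then each $e_i=h_i-h_i'$ lies in $J\cap S_{w_i-s}$. Every nonzero element of $J$ has degree at least $\min_j(d-w_j)\geq d-W>W$. But $w_i-s<W$, so $e_i=0$. This degree comparison is exactly the point where the reduction $d>2W$ from Lemma~\ref{lem:splitting} enters. It makes the uniqueness argument essentially free: the representative is forced in low degree.

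\textbf{Step 4: local nilpotence.} For homogeneous $p\in S_a$, the element $D^N(p)$ is homogeneous of degree $a-Ns$. It vanishes once $Ns>a$, because $S_b=0$ for $b<0$. A general $p$ is a finite sum of homogeneous components, so some power of $D$ kills it.

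There is no real obstacle here. The only care needed is in Step~1, checking that the derivation property transfers correctly so that $D(J)\subseteq J$ follows from $D$ inducing $\delta$, and in Step~2, checking that constants lie outside $\m+J$.
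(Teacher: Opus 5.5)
Your proof is correct and follows the paper's argument essentially step by step. The steps are the same: homogeneous representatives $h_i\in S_{w_i-s}$ are forced because $J$ has no nonzero element of degree $w_i-s<W<d-W$; Lemma~\ref{lem:maximal} excludes nonzero constant coefficients; agreement of $D$ and $\delta$ on generators modulo $J$ gives $D(J)\subseteq J$; and a degree count gives local nilpotence. The only slip is cosmetic: your opening sentence attributes uniqueness to Lemma~\ref{lem:syzygy}, but your Step~3, like the paper, uses only the degree bound on $J$.
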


\begin{proof}
The element $\delta([x_i])$ has degree $w_i-s<W$, where brackets denote the class in $A$. The ideal $J$ has no nonzero element in that degree. Hence $\delta([x_i])$ has a unique homogeneous representative $h_i\in S_{w_i-s}$. If this degree is negative, the representative is zero. If it is zero, Lemma~\ref{lem:maximal} shows that the representative is zero as well. In all cases $h_i\in\m$.

Define $D$ by the displayed formula, and let $\pi:S\to A$ be the quotient map. The maps $\pi D$ and $\delta\pi$ agree on the generators. The product rule then shows that they agree on all of $S$. If $p\in J$, this gives $\pi(Dp)=\delta(\pi p)=0$. Hence $D(J)\subseteq J$, and $D$ induces $\delta$ on the quotient. Uniqueness follows from uniqueness of each $h_i$.

For a homogeneous polynomial $p$ of degree $a$, the polynomial $D^N(p)$ has degree $a-Ns$, or is zero. It is zero when $Ns>a$. Every polynomial is a finite sum of homogeneous polynomials, so one sufficiently large $N$ works for the whole polynomial.
\end{proof}

\section*{Acknowledgement}
\begin{bluepassage}
I thank Professor Stephen Yau and Huaiqing Zuo for introducing this problem to me early in my PhD studies and for guiding me into this field. I thank Quan Zhang for completing the Lean formalization jointly with me, and Liangliang Shi and Rui Tao for helpful discussions. The use of ChatGPT 6 is described in the AI Declaration.
\end{bluepassage}

\end{document}